\newtheorem{proposition}{Proposition}[section]
\newtheorem{lemma}[proposition]{Lemma}
\newtheorem{corollary}[proposition]{Corollary}
\newtheorem{theorem}[proposition]{Theorem}
\newcommand{\diamd}{\overrightarrow{\mathrm{diam}}}
\DeclareMathOperator{\diam}{diam}
\DeclareMathOperator{\GL}{GL}
\begin{document}

\title{Bounds for the diameters of orbital graphs of affine groups}

\author{Attila Mar\'oti}
\address{Alfr\'ed R\'enyi Institute of Mathematics, Re\'altanoda utca 13-15, H-1053, Budapest, Hungary}
\email{maroti@renyi.hu}

\author{Saveliy V. Skresanov}
\address{Sobolev Institute of Mathematics, 4 Acad. Koptyug avenue, Novosibirsk, Russia}
\email{skresan@math.nsc.ru}

\keywords{orbital graph, diameter, affine primitive permutation group}
\subjclass[2020]{20B15, 20F69}
\thanks{The project leading to this application has received funding from the European Research Council (ERC) under the European Union's Horizon 2020 research and innovation programme (grant agreement No 741420). The first author was supported by the National Research, Development and Innovation Office (NKFIH) Grant No.~K138596, No.~K132951 and Grant No.~K138828.
The second author was supported by RAS Fundamental Research Program, project FWNF-2022-0002.}

\begin{abstract}
	General bounds are presented for the diameters of orbital graphs of finite affine primitive permutation groups.
	For example, it is proved that the orbital diameter of a finite affine primitive permutation group with a nontrivial point
	stabilizer $ H \leq \GL(V) $, where the vector space $ V $ has dimension $ d $ over the prime field,
	can be bounded in terms of $ d $ and $ \log |V| / \log |H| $ only.
	Several infinite families of affine primitive permutation groups with large orbital diameter are constructed.
	The results are independent from the classification of finite simple groups.   
\end{abstract}
\dedicatory{Dedicated to Pham Huu Tiep on the occasion of his 60th birthday}
\maketitle

\section{Introduction}

Connections between finite permutation groups and graphs quite often allow one to express
some abstract group-theoretic properties in combinatorial terms. One of the classical approaches
is the study of orbital graphs of permutation groups. Recall that if $ G $ is
a permutation group acting on a finite set $ X $, then an \emph{orbital graph} of $ G $
is a graph with vertex set $ X $ whose arc set is an orbit of $ G $ on $ X \times X $.
An orbital graph whose arcs are a subset of the diagonal $ \{ (x, x) \mid x \in X \} $ is
called a \emph{diagonal} orbital graph.

A transitive permutation group is called \emph{primitive} if it has no
nontrivial proper block of imprimitivity or, equivalently, if a point
stabilizer is a maximal subgroup in the group. Primitive permutation
groups can also be characterized in terms of their orbital graphs.
By a criterion of Higman~\cite{Higman}, a finite permutation group is primitive if and only if all its
non-diagonal orbital graphs are connected. The degree of an orbital graph of a finite
primitive permutation group is also of fundamental importance. By Sims conjecture,
resolved by Cameron, Praeger, Saxl and Seitz~\cite{CPSS} (see also a simplified proof by Pyber and Tracey~\cite{PT})
if some non-diagonal orbital graph of a finite primitive permutation group
has degree bounded by $ d $, then a point stabilizer of the group has size bounded in terms of $ d $. 

The subject of this paper is the diameter of an orbital graph of a finite primitive permutation group, or \emph{orbital diameter}.
Using model theory, Liebeck, Macpherson and Tent~\cite{LMT} described
finite primitive permutation groups whose non-diagonal orbital graphs have bounded diameter
(we note that in~\cite{LMT} orbital graphs are considered to be undirected),
see also the papers of Sheikh~\cite{Sh} and Rekv\'enyi~\cite{Re}.
The converse problem of finding upper bounds on the diameters of orbital graphs of primitive groups
was also considered in~\cite{LMT}, but in the case of affine groups only a partial result was obtained, see~\cite[Lemma~3.1]{LMT}.

Recall that a finite primitive permutation group $ G $ acting on a set $ X $ is called \emph{affine}
if it contains a unique minimal normal subgroup $V$ which is elementary abelian (and regular as a permutation group).
In this case a stabilizer $H$ in $G$ of a point in $X$ acts linearly on $V$,
viewed as a vector space over the prime field $\mathbb{F}_{p}$, where $p$ is prime.
Moreover $V$ is a faithful and irreducible $\mathbb{F}_{p}H$-module and conversely,
if $ V $ is a faithful and irreducible $ \mathbb{F}_pH $-module for a finite group $ H $,
then the semidirect product $ HV $ may be viewed as an affine primitive permutation group with socle $ V $.
The main goal of this paper is to provide detailed upper bounds on the diameters of orbital graphs
of affine primitive permutation groups, and complement the results of~\cite{LMT}.

Another motivation is a problem of finding a $ n^{o(1)} $ upper bound on diameters of orbital graphs
of some classes of primitive permutation groups of degree $ n $ and, more generally, a bound on diameters of constituent
graphs of primitive coherent configurations. See Pyber's paper~\cite{Pyb} proving a logarithmic bound for distance-regular graphs
and his conjecture in~\cite[p.~119]{LL}. We show that there exist several infinite families of affine primitive permutation groups
of degree $ n = p^d $, where $ p $ is a prime, with orbital diameter at least $ (p-1)d/4 $ (see Section~\ref{secExamples}).
In particular, the $ n^{o(1)} $ bound is not achievable for affine groups. We note that a weaker
version of this was found earlier by Pyber (personal communication).

Our third motivation comes from additive combinatorics. 
Cochrane and Cipra \cite[Theorem~1.2]{CC} proved the following result on the Waring problem in finite fields.
Let $ A $ be a nontrivial multiplicative subgroup of $ F^\times $, where $ F $ is a finite
field, and assume that $ A $ generates $ F $ additively.  Then $ n \cdot A = F $
for every $ n \geq 633 \cdot |F|^{\frac{\log 4}{\log |A|}} $, where $ n \cdot A $ denotes the sum of $ n $ copies of the set
$ A $ (see Proposition~\ref{CC} for a more precise statement), in particular, the diameter of a non-diagonal
orbital graph of the affine primitive permutation group $ AF $ is bounded by $ n $.
Another goal of this paper is to present a generalization of this theorem (see Theorem~\ref{main}).

The theorem of Cochrane and Cipra should be compared to the following results of Babai.
Let $r$ and $p$ be primes such that $r$ divides $p-1$ and let $H(p,r)$ be the unique nonabelian group of order~$pr$.
This group can be generated by two elements. Let $\diam_{\min}(H(p,r))$ and $\diam_{\max}(H(p,r))$ denote the minimum and, respectively,
maximum diameter of a Cayley graph of $H(p,r)$ over all possible pairs of generators.
It was shown by Babai in an unpublished work that if $r$ is bounded and $p \to \infty$ then both
$\diam_{\min}(H(p,r))$ and $\diam_{\max}(H(p,r))$ have the order of $\Theta(p^{1/(r-1)})$ (see \cite[Theorem 6.1]{many}).
If $r > p^{1/2 + c}$ for some constant $c > 0$ then $\diam_{\min}(H(p,r)) = \Theta(r^{1/2})$ and
$\diam_{\max}(H(p,r)) = \Theta(r)$ (see \cite[Theorem 6.2]{many}). The second result is connected with
the Waring problem in $ \mathbb{F}_p $, see~\cite[Section~6]{many} for details.

Another related result is a theorem of Peluse~\cite{P} on exponential sums over orbits of a linear group.
It was shown that for every positive integer $ d $ and positive numbers $ \delta $, $ \beta $ there is a positive
$ \epsilon $ such that whenever $ HV $ is an affine primitive permutation group where
$ H \leq \GL(V) $ and $ V $ is a vector space of dimension $ d $ over $ \mathbb{F}_p $ and $ \Delta $ is
a nonzero orbit of $ H $ on $ V $ with (1)~$ |\Delta| \geq p^\delta $ and (2)~$ |\Delta \cap P| \leq |\Delta|^{1 - \beta} $ for every
hyperplane $ P $ in $ V $, then the absolute value of an exponential sum over $ \Delta $ is bounded by $ p^{-\epsilon}|\Delta| $.
By Fourier analysis, this bound implies that the maximum diameter of a non-diagonal orbital graph of the affine
primitive permutation group $HV$ is bounded in terms of $ d $, $ \delta $ and $ \beta $ only.
The third goal of the paper is to show that the maximum diameter of a non-diagonal orbital graph
of an affine primitive permutation group is bounded in terms of $ d $ and $ \delta $ provided that (1) holds (see Corollary~\ref{ratioBounds}).

To state our main result, let $ V $ be a faithful irreducible $ \mathbb{F}_pH $-module for a finite group $ H $,
and let $ d $ be the dimension of $ V $ over $ \mathbb{F}_p $.
Denote by $ \diamd(V, H) $ the supremum of the diameters of non-diagonal
orbital graphs of $ HV $ where graphs are considered to be oriented, and let $ \diam(V, H) $ be the
supremum of diameters of non-diagonal orbital graphs where we forget about arc directions and consider graphs to be undirected.
It follows from elementary diameter estimates (see Proposition~\ref{lowerBounds} and~\ref{centerIneq}) that
$$ \frac{1}{2}(|V|^{1/|H|}-1) \leq \diam(V, H) \leq \diamd(V, H) \leq (p-1)d. $$

Our main result may be considered an improvement of the previous upper bound on the diameter in the case when $ p $ is large. 

\begin{theorem}\label{main}
	Let $ HV \leq \mathrm{AGL}(V) $ be an affine primitive permutation group with $ V $ a vector space
	of dimension $d$ over $\mathbb{F}_{p}$, $ p $ prime, and $H$ a point stabilizer.
	\begin{enumerate}
		\item If $H$ has a composition factor isomorphic to a finite simple group of Lie type in characteristic $p$, then
		$$\mathrm{diam}(V,H) < 2^{22d^3},$$
		in particular, the diameter is bounded in terms of $ \dim V $ only.
		\item There exists a function $ J(d) $ depending only on $ d $ such that whenever
			$H$ does not have a composition factor isomorphic to a finite simple group of Lie type in characteristic $p$,
			and $ |H| \geq J(d)^2 $, then
			$$\overrightarrow{\mathrm{diam}}(V,H) <  2^{18d^2} \cdot {|V|}^{\frac{d\log 64}{\log |H|}}.$$	
	\end{enumerate}
\end{theorem}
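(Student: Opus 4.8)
The plan is to recast the problem additively. Since $V$ acts on itself by regular translations, the non-diagonal orbital graphs of $HV$ are exactly the Cayley digraphs on $V$ whose connection set is a nonzero $H$-orbit $\Delta=v^{H}$, and $\diamd(V,H)=\max_{\Delta}\min\{\,n:n\Delta=V\,\}$, where $n\Delta$ is the $n$-fold sumset; the undirected diameter is the same quantity with $\Delta$ replaced by $\Delta\cup(-\Delta)$. So in both parts the task is to bound, uniformly over orbits, how many copies of $\Delta$ are needed to fill $V$, and throughout one may fall back on the trivial estimate $\diamd(V,H)\le(p-1)d$ recorded in the Introduction.

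Part (1) rests on a ``root element'' estimate. If $u\in H$ is unipotent of order $p$, its Jordan blocks on $V$ have size at most $\min(p,d)\le d$, so $(u-1)^{d}=0$, and for $v\in\Delta$ the orbit point $u^{k}v=\sum_{j}\binom{k}{j}(u-1)^{j}v$ again lies in $\Delta$; thus the $\langle u\rangle$-orbit of $v$ is an affine image of the moment curve $k\mapsto\bigl(1,k,\binom{k}{2},\dots\bigr)$. When $(u-1)^{2}=0$ this orbit is simply the line $v+\mathbb{F}_{p}(u-1)v\subseteq\Delta$, and conjugating $u$ by $H$ produces such lines in $d$ directions that span $V$ (irreducibility), so $d$ copies of $\Delta$ already cover $V$; in general a bounded Waring-type construction on the moment curve does the job. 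The remaining work is to produce a usable $u$ and keep the bound free of $p$ and of the field of definition: a Clifford reduction on a minimal normal subgroup $N$ of $H$ peels off the cases where $N$ is elementary abelian (then $O_{p}(H)=1$ forces $N$ to be a $p'$-group, $V$ becomes a lower-dimensional module over an extension of $\mathbb{F}_{p}$, and one recurses), where $H$ is imprimitive or tensor-decomposable (recurse on the smaller factor, the bounds combining multiplicatively), and where $H$ modulo its soluble radical is close to a quasisimple group of Lie type in characteristic $p$ — and here, since such a group carries a faithful $d$-dimensional module in its defining characteristic, classical CFSG-free lower bounds on defining-characteristic representation dimensions pin its Lie rank and defining field down to functions of $d$, so its root elements act on $V$ with Jordan type controlled by $d$ alone. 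The cubic exponent in $2^{22d^{3}}$ is precisely what lets the multiplicative combinations in the recursion close.

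For part (2) the organising principle is a dichotomy driven by the trivial bound. If $|H|$ is bounded in terms of $d$, then $|V|^{d\log 64/\log|H|}\ge p^{d}$ already dwarfs $(p-1)d$ and the inequality holds vacuously; one takes $J(d)$ to mark exactly this threshold. For $|H|\ge J(d)^{2}$, the hypothesis that $H$ has no composition factor of Lie type in characteristic $p$ is what does the work: via Clifford theory and CFSG-free bounds on orders of primitive linear groups, it forces $|H|$ to be small (so that the trivial bound applies) in every case except the \emph{$\Gamma L_{1}$-type} one, in which $V$ is a direct sum of one-dimensional modules over field extensions of $\mathbb{F}_{p}$ with $H$ acting through field multiplications, Galois automorphisms and permutations of the summands. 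In that case, writing $A$ for the subgroup of $H$ acting by field multiplications (of index $\le d$ in $H$), the assumption $|H|\ge J(d)^{2}$ forces $\log|A|\ge\tfrac12\log|H|$; one splits $\Delta=v^{H}$ into at most $d$ Galois twists of $A$-orbits, applies the Cochrane--Cipra bound of Proposition~\ref{CC} to each after descending to the relevant subfield, and recombines at most $d$ of these to recover $V$. The $d$'s and the subfield descents inflate the Cochrane--Cipra exponent $\log 4$ to $d\log 64$, and the additive bookkeeping together with the absorbed cases contributes the prefactor $2^{18d^{2}}$.

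I expect the main obstacle to be the defining-characteristic module analysis in part (1): showing, without CFSG, that a group of Lie type in characteristic $p$ admitting a faithful irreducible $\mathbb{F}_{p}$-module of dimension $d$ has bounded Lie rank and bounded defining field in terms of $d$, and organising the imprimitive and tensor reductions so that the recursion closes with a bound depending on $d$ alone rather than on $p$. In part (2) the analogous difficulty is proving, CFSG-freely, that the $\Gamma L_{1}$-type configuration is the only one for which the target bound is not already implied by $\diamd(V,H)\le(p-1)d$, and carefully controlling subfield generation when passing from an orbit to its Galois conjugates so that Cochrane--Cipra genuinely applies on each piece.
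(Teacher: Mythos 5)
Your proposal takes a genuinely different route from the paper's, and the divergence in Part~(1) is where the trouble lies. The paper does not touch unipotent elements at all. Its strategy is to feed both parts of the theorem into a single technical device, Theorem~\ref{mainExplicit}, whose input is a nontrivial \emph{abelian $p'$-subgroup} $A\leq H$; that theorem bounds $\diam(V,H)$ in terms of $\log|A|$ and the number $k$ of irreducible $\mathbb{F}_pA$-summands of $V$, with Cochrane--Cipra (Proposition~\ref{CC}) doing the additive work on each summand. For Part~(1) the required $A$ is produced by Lemma~\ref{orderInSimp}: a finite simple group of Lie type in characteristic~$p$ contains a $p'$-element of order at least $p^{1/5}$, coming from a Cartan subgroup (this is where the $p$-dependence cancels, since $|A|\geq p^{1/5}$ makes $|V|^{\log 4/\log|A|}\leq 4^{5d}$). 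For Part~(2) it is Larsen--Pink (Proposition~\ref{LP}): after Clifford theory kills $O_p(H)$, the hypotheses force $E=B$, so $H$ has an abelian normal $p'$-subgroup of index at most $J(d)$, hence of order at least $|H|^{1/2}$ once $|H|\geq J(d)^2$. Your unipotent element, by contrast, is a $p$-element, so it is not admissible input to the paper's machinery, and you would have to build an entirely independent sumset argument for it.

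That independent argument is exactly where I see a real gap. When $(u-1)^2\neq 0$ the orbit of $v$ under $\langle u\rangle$ is a moment curve of degree up to $d-1$, and the claim that ``a bounded Waring-type construction on the moment curve does the job'' is doing a lot of unexamined work: the $n$-fold sumset of $\{(1,k,\binom{k}{2},\dots):k\in\mathbb{F}_p\}$ is governed by which tuples of power sums of $n$ elements of $\mathbb{F}_p$ are realizable, and surjectivity for $n$ of size polynomial in $d$, uniformly in $p$, is not obvious (small primes relative to the block size cause genuine trouble, as does asking the roots to lie in $\mathbb{F}_p$ rather than its closure). You would also need to show that a composition factor of Lie type in characteristic~$p$ actually yields an element of $H$ acting on $V$ with bounded Jordan type, not just a $p$-element in a subquotient, and your appeal to ``classical CFSG-free lower bounds on defining-characteristic representation dimensions'' to pin down the rank and field would need an explicit citation or proof — the paper pointedly avoids representation theory of the Lie type factor and instead only uses the order of its Cartan subgroup. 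Your Part~(2) sketch is closer in spirit to the paper (both ultimately rest on a large abelian normal $p'$-subgroup plus Cochrane--Cipra), but routing it through a $\Gamma L_1$-type case analysis, rather than directly through Larsen--Pink, adds an unproved structural claim (that all other configurations already satisfy $\diamd(V,H)\leq(p-1)d\leq$ the target) and unwritten subfield-descent bookkeeping. Both parts would be rescued most economically by first formulating and proving an analogue of Theorem~\ref{mainExplicit} keyed to an abelian $p'$-subgroup, and then supplying that subgroup in the two cases as the paper does.
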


Part~(2) of Theorem \ref{main} may be viewed as a generalization of the
aforementioned theorem of Cochrane and Cipra~\cite{CC} for arbitrary affine primitive groups
in the case when $ d $ is bounded.

The function $ J(d) $ from the statement of (2) of Theorem~\ref{main}
is precisely the function from the Larsen and Pink theorem, see Proposition~\ref{LP},
which is a classification-free version of Weisfeiler's theorem on the structure of linear groups~\cite{W, Wnotes}.
We will show in Section~\ref{secExamples} that there exist groups $ H $ of
size bounded in terms of $ d $ with orbital graphs of large diameter,
so the condition that $ |H| $ is larger than some constant depending on
$ d $ is essential in~(2) of Theorem~\ref{main}.

In~\cite[Part (1) of Theorem~1.1]{LMT} it was shown that if a class of affine primitive permutation
groups has bounded orbital diameter, then this class consists of the so-called groups of $ t $-bounded
classical type, for some bounded $ t $. It follows from the definition of groups of $ t $-bounded classical
type that an affine group $ HV $ with $ d = \dim V $ is of $ t $-bounded classical type for all $ t \geq d $,
and therefore the conclusion of~\cite[Theorem~1.1]{LMT} is nontrivial only for classes of groups with unbounded $ d $
(see the remark before~\cite[Lemma~3.2]{LMT}).

As a corollary to Theorem~\ref{main}, one can show that diameters are
controlled by the ratio $ \log |V| / \log |H| $, when $ d $ is bounded.

\begin{corollary}\label{ratioBounds}
	For any $ d $ there exists a constant $ f(d) $ depending on $ d $ only such that
	for any affine primitive permutation group $ HV \leq \mathrm{AGL}(V) $ with
	$V$ a vector space of dimension $d$ over $\mathbb{F}_{p}$ and $ 1 < H \leq \mathrm{GL}(V)$, we have
	$$ \frac{\log |V|}{3\log |H|} \leq \diam(V,H) \leq f(d)^{\frac{\log |V|}{\log |H|}}. $$
	In particular, when $ d $ is bounded, $ \diam(V, H) $ is bounded if and only if
	$ \frac{\log|V|}{\log |H|} $ is bounded.
\end{corollary}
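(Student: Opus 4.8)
The plan is to derive both inequalities from Theorem~\ref{main} together with the elementary estimates quoted before it. First I would dispose of the lower bound. By Proposition~\ref{lowerBounds} we have $\diam(V,H) \geq \frac12(|V|^{1/|H|}-1)$. Write $q = |V|^{1/|H|} \geq 2^{1/|H|} > 1$. The elementary inequality $\tfrac12(q-1) \geq \tfrac13\log_2 q$ (which one checks by calculus for all $q>1$, or at least after isolating the small-$q$ regime) gives $\diam(V,H) \geq \tfrac13 \cdot \frac{\log|V|}{|H|\log 2} \cdot \log 2$; more carefully, since $\log q = \frac{\log|V|}{|H|}$ one gets $\diam(V,H) \geq \frac{\log|V|}{3|H|}$. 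To replace $|H|$ by $\log|H|$ in the denominator one uses that $|H| \le |\GL_d(p)| < p^{d^2} = |V|^d$, so $\log|H| < d\log|V|$; but this runs the wrong way. Instead the clean route is: $\diam(V,H) \ge \tfrac12(|V|^{1/|H|}-1) \ge \tfrac12(2^{1/|H|}-1)$ only helps when $|V|=2$, so one should keep $|V|$ general and note $|V|^{1/|H|} - 1 \ge \frac{\log|V|}{|H|}\cdot\frac{\log 2}{2}$-type bounds are too weak; rather, since $\diam(V,H)\ge 1$ always and $\diam(V,H) \ge \tfrac12(|V|^{1/|H|}-1)$, whenever $\frac{\log|V|}{\log|H|} \le 3$ the claimed bound $\frac{\log|V|}{3\log|H|}\le 1 \le \diam$ holds trivially, and whenever $\frac{\log|V|}{\log|H|} > 3$ one has $|V| > |H|^3 \ge |H|$, so $|V|^{1/|H|} \ge |V|^{1/\log_2 |V|} \cdot(\text{something})$; the honest statement is that $|V|^{1/|H|} \ge 2^{\log_2|V|/|H|}$ and $\log_2|V|/|H|$ may be tiny, so we instead bound $|H| \le |V|^{1/3}\cdot\log_2|V|$... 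The cleanest correct argument: set $a=\log|V|$, $b=\log|H|$; we want $\diam \ge a/(3b)$. Since $|H|\le|\GL(V)|<|V|^d$ we have $b<da$, hence $|H| < |V|^d$ and $1/|H| > |V|^{-d}\cdot$nothing useful. I will therefore just combine $\diam \ge \tfrac12(|V|^{1/|H|}-1)$ with the observation that $|V|^{1/|H|} - 1 \ge \ln(|V|^{1/|H|}) = \frac{\ln|V|}{|H|}$ and separately use $|H| \le |V|$ is false in general but $|H|$ small forces $|V|^{1/|H|}$ large; the short clean path is to split on whether $|H| \le \log_2|V|$ or not, and in the first case $|V|^{1/|H|}\ge 2$ so $\diam\ge 1/2$, done against $a/(3b)$ when $b\ge\log\log|V|$... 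I expect this bookkeeping to be the only slightly fiddly part, and it will be handled by a two-case split on the size of $|H|$ relative to $\log|V|$, using $\log|H|<d\log|V|$ from $H\le\GL(V)$ in the case that survives.

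For the upper bound, I would split according to the dichotomy in Theorem~\ref{main}. If $H$ has a composition factor that is a simple group of Lie type in the defining characteristic $p$, then part~(1) gives $\diam(V,H) < 2^{22d^3}$, a constant depending on $d$ only; since $\frac{\log|V|}{\log|H|} \ge 1$ (as $|H| \le |\GL(V)| < |V|^d$ gives $\log|H| < d\log|V|$, hence $\frac{\log|V|}{\log|H|} > 1/d$, and in any case we only need a lower bound on the exponent to absorb a constant), we can write $2^{22d^3} \le (2^{22d^4})^{\log|V|/\log|H|}$ provided $f(d)$ is chosen at least $2^{22d^4}$. If $H$ has no such composition factor, then either $|H| \ge J(d)^2$, in which case part~(2) gives $\diamd(V,H) < 2^{18d^2}\cdot|V|^{\frac{d\log 64}{\log|H|}} = 2^{18d^2}\cdot (2^6)^{d\log|V|/\log|H|}$, and again $2^{18d^2} \le (2^{18d^3})^{\log|V|/\log|H|}$ using $\log|V|/\log|H| > 1/d$, so this is at most $f(d)^{\log|V|/\log|H|}$ with $f(d) = \max(2^{18d^3}\cdot 2^{6d}, \dots)$; or $1 < |H| < J(d)^2$, in which case $|H|$ is bounded in terms of $d$, so $\log|H| < \log(J(d)^2)$ and the universal bound $\diam(V,H) \le \diamd(V,H) \le (p-1)d$ combined with $|V| = p^d \ge p$ gives $p \le |V|$, hence $(p-1)d < d|V| \le d\cdot|V|^{1} $; but we need the bound in the form $f(d)^{\log|V|/\log|H|}$, and since $\log|H| < 2\log J(d)$ the exponent $\frac{\log|V|}{\log|H|} > \frac{\log|V|}{2\log J(d)}$, so $|V|^{1/(2\log J(d))} \le |V|^{\log|V|/(\ldots)}$... more simply, $(p-1)d \le d|V| = d\,|V|$ and $|V| = |V|^{(\log|H|/\log|H|)} \le |V|^{\log|V|/\log|H| \cdot (\log|H|/\log|V|)}$; here $\log|H| \ge \log 2$ since $H>1$, so $|V| \le (|V|^{\log 2/\log|H|})^{\log|H|/\log 2} \le \big(2^{\log|H|}\big)^{\ldots}$—the upshot is $(p-1)d \le \big(C(d)\big)^{\log|V|/\log|H|}$ for a suitable $C(d)$ because in this third subcase the exponent $\log|V|/\log|H|$ is bounded below by $\log|V|/(2\log J(d))$, which is at least a fixed fraction of $\log|V|$, so raising a constant $C(d)\ge 4$ to this power already dominates the linear-in-$|V|$ quantity $d|V|$.

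Finally I would take $f(d)$ to be the maximum of the three constants produced above (from part~(1), from part~(2), and from the small-$|H|$ subcase), which depends on $d$ only, completing the upper bound; the "in particular" clause is then immediate since for bounded $d$ both displayed bounds are monotone functions of $\frac{\log|V|}{\log|H|}$ alone. The main obstacle I anticipate is not conceptual but purely the careful handling of the degenerate ranges — namely verifying the lower bound when $|H|$ is very small (so that $|V|^{1/|H|}$ is large and the estimate is easy) versus when $|H|$ is comparable to $|V|$ (so that one must instead use $\diam \ge 1 \ge \frac{\log|V|}{3\log|H|}$, valid exactly when $\log|V| \le 3\log|H|$), and matching this split with the trivial cases in the upper bound; I would organize the whole proof around the single threshold "$\log|V| \le 3\log|H|$ versus $>$" to keep the case analysis clean.
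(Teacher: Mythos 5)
Your upper-bound argument is correct and amounts to the same strategy as the paper's. You split on whether $H$ has a composition factor of Lie type in characteristic $p$, then on whether $|H| \geq J(d)^2$; you absorb the multiplicative constants from Theorem~\ref{main} into $f(d)$ via the inequality $\log|V|/\log|H| > 1/d$ (from $|H| \leq |\GL(V)| < |V|^d$); and you dispose of the subcase $1 < |H| < J(d)^2$ with the universal bound $\diam(V,H) \leq (p-1)d$ from Proposition~\ref{centerIneq}, using $\log|V|/\log|H| \geq d\log p/\log(J(d)^2)$ to show that $f(d)^{\log|V|/\log|H|}$ dominates $pd$ once $f(d) \geq J(d)^4$. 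The paper arranges the cases slightly differently, peeling off $|H| < J(d)^2$ first and only then invoking Theorem~\ref{main}, but the arithmetic and the resulting choice of $f(d)$ are the same.

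The lower bound, however, contains a genuine gap. You cite only the second inequality of Proposition~\ref{lowerBounds}, $\diam(V,H) \geq \tfrac12(|V|^{1/|H|}-1)$, and then try several ways to convert it into $\frac{\log|V|}{3\log|H|}$; none of them can succeed. If $|H|$ is of order $\log|V|$, then $|V|^{1/|H|}$ is bounded by an absolute constant, so $\tfrac12(|V|^{1/|H|}-1)$ is bounded, whereas $\frac{\log|V|}{3\log|H|}$ is of order $\frac{\log|V|}{\log\log|V|}$ and hence unbounded; moreover this regime satisfies $\log|V| > 3\log|H|$, so your proposed threshold split does not exclude it, and the anticipated ``bookkeeping'' cannot be completed along that route. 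The fix is immediate: the required bound \emph{is} the first inequality of Proposition~\ref{lowerBounds}, which states $\frac{\log|V|}{3\log|H|} \leq \frac{\log|V|}{\log(2s+1)} \leq \diam(V,H)$ for $s$ the size of the smallest nonzero $H$-orbit, proved from $|V| \leq (2s+1)^{\diam(V,H)}$ together with $2s+1 \leq 2|H|+1 \leq |H|^3$ when $|H| \geq 2$. That one-line citation is exactly what the paper uses, and it should replace your lower-bound paragraph.
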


In Section~\ref{secExamples} we will show that $ f(d) $ grows at least linearly.

In order to prove Theorem~\ref{main} we establish the following technical result
providing a more precise upper bound on the diameters of orbital graphs.
Recall that a finite group is called a $ p' $-group if its order is not divisible by a prime $ p $.

\begin{theorem}\label{mainExplicit}
	Let $HV \leq \mathrm{AGL}(V)$ be an affine primitive permutation group where $V$ is a vector space of dimension $d$ over $\mathbb{F}_{p}$
	and $H \leq \mathrm{GL}(V)$ acts irreducibly on~$V$. Let $A$ be a nontrivial abelian $p'$-subgroup of $H$ and let $k$ be the number of
	irreducible summands of the completely reducible $\mathbb{F}_{p}A$-module $V$. Then
	$$\mathrm{diam}(V,H) < 322 d \cdot 144^{k(k+1)} \cdot {|V|}^{\frac{k (k+1)\log 4}{\log |A|}}.$$
	Moreover, if $A$ is normal in $H$, then
	$$\overrightarrow{\mathrm{diam}}(V,H) <  d \cdot 2576^{k(k+1)} \cdot {|V|}^{\frac{(k+1)\log 4}{\log |A|}}.$$	
\end{theorem}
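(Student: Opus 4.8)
The plan is to reduce everything, via the structure of $V$ as a module over $A$, to the theorem of Cochrane and Cipra (Proposition~\ref{CC}). First reformulate: a non‑diagonal orbital graph of $HV$ is the Cayley graph of $(V,+)$ with connection set an $H$‑orbit $\Delta=Hv$, $0\ne v\in V$, and since $HV$ is primitive, $\Delta$ generates $(V,+)$ by Higman's criterion; so $\diamd(V,H)$ (resp.\ $\diam(V,H)$) is the supremum over such $\Delta$ of the least $N$ for which every element of $V$ is a sum of at most $N$ elements of $\Delta$ (resp.\ of $\Delta\cup(-\Delta)$), and I must bound this $N$. Since $A$ is an abelian $p'$‑group, $B:=\mathbb{F}_p[A]\subseteq\mathrm{End}_{\mathbb{F}_p}(V)$ is a commutative semisimple $\mathbb{F}_p$‑algebra, hence $B\cong\prod_{j=1}^{r}\mathbb{F}_{q_j}$ with $r\le k$, and correspondingly $V=\bigoplus_{j=1}^{r}V_{(j)}$ with $V_{(j)}$ a vector space over $\mathbb{F}_{q_j}$ and $k=\sum_j\dim_{\mathbb{F}_{q_j}}V_{(j)}$; moreover $A\hookrightarrow B^\times=\prod_j\mathbb{F}_{q_j}^\times$, and the image $A_j$ of $A$ in each $\mathbb{F}_{q_j}^\times$ satisfies $\mathbb{F}_p[A_j]=\mathbb{F}_{q_j}$, because $V_{(j)}$ is a sum of copies of the irreducible $\mathbb{F}_pA$‑module affording the $j$‑th block of $B$.

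The heart of the reduction is as follows. For $w\in V$ one has $Bw=\operatorname{span}_{\mathbb{F}_p}(Aw)$, and $Aw\subseteq\Delta$ whenever $w\in\Delta$ (as $A\le H$). As a $B$‑module $Bw\cong\prod_{j\in S(w)}\mathbb{F}_{q_j}$, where $S(w)$ is the set of $j$ for which the $j$‑component of $w$ is nonzero, and under this identification $Aw$ becomes the image $A_{S(w)}$ of $A$ in $\prod_{j\in S(w)}\mathbb{F}_{q_j}^\times$, which still $\mathbb{F}_p$‑spans $\prod_{j\in S(w)}\mathbb{F}_{q_j}$. Since $\Delta$ spans $V$ over $\mathbb{F}_p$, hence over $B$, and $V$ needs at most $k$ generators over $B$, I can choose $v^{(1)},\dots,v^{(s)}\in\Delta$ with $s\le k$ and $V=\sum_a Bv^{(a)}$; then the diameter of the orbital graph of $\Delta$ is at most $\sum_{a}D_a$, where $D_a$ is the diameter of the Cayley graph of $(Bv^{(a)},+)$ with connection set $A_{S(v^{(a)})}$ (or its symmetrization). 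This reduces the theorem to a quantitative Waring‑type statement for products of fields: \emph{if $B'=\prod_{i=1}^{t}\mathbb{F}_{q_i}$ and $A'\le B'^\times$ with $\operatorname{span}_{\mathbb{F}_p}(A')=B'$, then every element of $B'$ is a sum of at most $N$ elements of $A'$, with $N$ bounded in terms of $t$, $|B'|$ and $|A'|$ of the required shape.}

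I would prove this by induction on $t$, the number of field factors. The case $t=1$ is precisely Cochrane--Cipra (Proposition~\ref{CC}): a nontrivial subgroup $A'\le\mathbb{F}_{q_1}^\times$ that additively generates $\mathbb{F}_{q_1}$ satisfies $n\cdot A'=\mathbb{F}_{q_1}$ for all $n\ge 633\,|\mathbb{F}_{q_1}|^{\log4/\log|A'|}$, already an unsigned‑sum statement. For the inductive step, write $B'=\mathbb{F}_{q_t}\times B''$: the inductive hypothesis applied to the image of $A'$ in $B''^\times$ lets one hit any prescribed value in the last $t-1$ coordinates in boundedly many steps, at the cost of an uncontrolled value in the $\mathbb{F}_{q_t}$‑coordinate; this last coordinate is then corrected by applying Cochrane--Cipra once more inside $\mathbb{F}_{q_t}$ to the translates of the fibres of $A'\to B''^\times$, and it is exactly the hypothesis $\operatorname{span}_{\mathbb{F}_p}(A')=B'$ that makes these fibres rich enough for the correction to be efficient. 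Multiplying the step counts over the $t$ levels of the induction and over the $s\le k$ summands, bounding $\log|A_{S(v^{(a)})}|\ge(\log|A|)/k$ and $|B'|\le|V|$, and tracking constants gives $\diam(V,H)<322\,d\cdot144^{k(k+1)}\cdot|V|^{k(k+1)\log4/\log|A|}$. When $A\trianglelefteq H$, the group $H$ permutes the $A$‑isotypic components of $V$ transitively, forcing all the fields $\mathbb{F}_{q_j}$ (and all the coordinate kernels of $A$) to have the same size; this symmetry lets one work with a single field in place of the full product, removing the factor $k$ coming from the number of summands and keeping every estimate unsigned, which yields the directed bound $\diamd(V,H)<d\cdot2576^{k(k+1)}\cdot|V|^{(k+1)\log4/\log|A|}$.

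The main obstacle is the inductive step of the product‑of‑fields statement: Cochrane--Cipra gives quantitative control in one field, and the real work is to couple several fields while keeping the number of summands — hence the exponent — under control, and, in the normal case, to do so without ever subtracting. A further point needing care is that the inclusion $Aw\subseteq\Delta$ can be too weak when $w$ lies in a component on which $A$ acts through a small quotient; there one must either enlarge the chosen generators within $\Delta$ or use that $\Delta$, being a full $H$‑orbit, already meets the relevant components in a large set. Finally, a routine but nontrivial bookkeeping of the constants from the various invocations of Proposition~\ref{CC} is needed to land on the explicit values $322\,d$, $144^{k(k+1)}$ and $2576^{k(k+1)}$.
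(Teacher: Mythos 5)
Your overall strategy is the same one the paper uses — reduce to Cochrane--Cipra by decomposing $V$ as an $\mathbb{F}_pA$-module and inducting on the number of components — but there are two concrete gaps, and precisely the places where you flag ``obstacles'' are where the paper does real work that your sketch does not reproduce.

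First, the inductive step of your ``product of fields'' lemma does not go through as stated. You propose to fix the last coordinate $\mathbb{F}_{q_t}$ by ``applying Cochrane--Cipra to translates of the fibres of $A' \to B''^\times$,'' claiming that $\mathrm{span}_{\mathbb{F}_p}(A') = B'$ makes these fibres rich. But the fibres are cosets of $\ker(A' \to B''^\times)$, and this kernel can be trivial even when $A'$ spans $B'$: already $A' = \mathbb{F}_4^\times \times \{1\} \leq (\mathbb{F}_4 \times \mathbb{F}_2)^\times$ spans $\mathbb{F}_4 \times \mathbb{F}_2$ over $\mathbb{F}_2$ while $A' \to \mathbb{F}_4^\times$ is injective. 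More importantly, your approach applies Cochrane--Cipra in each factor $\mathbb{F}_{q_j}$ with the local group $A_j$, and the exponent that comes out is $\log 4 / \log |A_j|$; when some $|A_j|$ is tiny (e.g.\ $A$ acts almost trivially on one isotypic block), this blows up. The bound $\log |A_{S(v^{(a)})}| \geq (\log|A|)/k$ that you invoke does not control the individual $|A_j|$ and can itself fail, since an $H$-orbit need not contain vectors supported on the components where $A$ has a large image. This is exactly the problem the paper's Lemma~\ref{removeSmall} solves: it splits the summands into those with $|A_i| \geq |A|^{1/k}$ and those with $|A_i| < |A|^{1/k}$, and then uses the difference trick $u - u^a$ (choosing $a \in A$ fixing a small component but moving the first) to produce, inside $2^{k-j}\cdot(\Delta\cup -\Delta\cup\{0\})$, a vector supported only on the ``large'' components. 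Without such a step you cannot guarantee that the Cochrane--Cipra inputs are large enough, and there is nothing in your argument to play this role.

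Second, your reduction covers $V = \sum_a Bv^{(a)}$ using at most $k$ generators and never invokes the $H$-action beyond ``$\Delta$ spans $V$,'' whereas the paper's Lemma~\ref{subgroupTrick} is what produces the factor $d$ and absorbs the worst-case behaviour of a single orbit: once $m\cdot(v^A \cup\{0\})$ contains a nontrivial subspace, $H$-translates cover $V$ in $dm$ steps. The paper's inner induction (Lemma~\ref{removeLarge}) reduces the number of nonzero projections by a pigeonhole argument on $V_i$-projections of $2N\cdot(v^A\cup\{0\})$, and crucially it uses only \emph{addition} of $v^H\cup\{0\}$, which is what makes the directed bound $\diamd(V,H)$ available when $A \trianglelefteq H$; your fibre-correction mechanism does not obviously avoid subtraction, so even if repaired it is unclear it would deliver the directed estimate. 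The final constant-tracking ($322d$, $144^{k(k+1)}$, $2576^{k(k+1)}$) falls out of the paper's explicit $N = 1 + \max_i \lfloor 160(2|V_i|)^{k\log 4/\log|A|}\rfloor$, the bound $d(4N)^{l(v)+1}$ from the inner induction, and the extra factor $2^{k-1}$ from Lemma~\ref{removeSmall}; as your proposal stands these numbers are not derivable from it.
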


The proof of Theorem \ref{mainExplicit} relies on the result of Cochrane and Cipra~\cite[Theorem~1.2]{CC}.

It should be mentioned that our results are free from the classification of finite simple groups.
By utilizing the classification it is possible to give a good bound on the function $ J(d) $
(see, for example, the bounds in \cite{Wnotes} or \cite{Collins}) and improve the upper bounds in the case
when the group contains a composition factor isomorphic to a finite simple
group of Lie type in characteristic $ p $, but these questions are out of scope of this paper.
	
The structure of the paper is as follows. In Section~\ref{secPrelim} we provide
necessary preliminaries on additive properties of subsets and reformulate the
problem of bounding diameters in those terms.  In Section~\ref{secElem} we give
elementary upper and lower bounds on the diameters, in particular, we provide an
upper bound in terms of the intersection of the point stabilizer with the group of
scalar matrices.  Section~\ref{secExplicit} contains the proof of Theorem
\ref{mainExplicit}. In Section~\ref{secLP} we derive Theorem~\ref{main} from Theorem~\ref{mainExplicit},
and in Section~\ref{secCor} we derive Corollary~\ref{ratioBounds}.
In Section~\ref{secExamples} we provide examples of affine primitive permutation groups with large diameter,
proving that some of our estimates are tight.

\section{Preliminaries}\label{secPrelim}

Let $ V $ be a finite abelian group, and let $ \Delta, \Gamma $ be subsets of $ V $.
We define the sum, difference and negation of subsets as usual:
\begin{align*}
	\Delta + \Gamma &= \{ x + y \mid x \in \Delta, y \in \Gamma \},\\
	\Delta - \Gamma &= \{ x - y \mid x \in \Delta, y \in \Gamma \},\\
	- \Delta &= \{ -x \mid x \in \Delta \}.
\end{align*}
For an integer $ n \geq 1 $ let $ n \cdot \Delta $ denote the sum of $ n $ copies of $ \Delta $.
The following proposition records several properties of subset sums, which will be used without further notice.

\begin{proposition}
	Let $ V $ be a finite abelian group, and let $ \Delta \subseteq V $.
	\begin{enumerate}
		\item If $ m \leq n $, then $ |m \cdot \Delta| \leq |n \cdot \Delta| $.
			Furthermore, if $ 0 \in \Delta $, then $ m \cdot \Delta \subseteq n \cdot \Delta $.
		\item If $ m \cdot \Delta = V $, then for any $ n \geq m $ we have $ n \cdot \Delta = V $.
		\item If $ \Delta + \Delta \subseteq \Delta $ and $ \Delta $ is nonempty, then $ \Delta $
			is a subgroup of $ V $.
	\end{enumerate}
\end{proposition}
\begin{proof}
	If $ \Delta $ is nonempty and $ x \in \Delta $, then for any $ \Gamma \subseteq V $ we have
	$$ |\Gamma| = |\Gamma + x| \leq |\Gamma + \Delta|, $$
	where the last inequality holds since $ \Gamma + x \subseteq \Gamma + \Delta $.
	Furthermore, if $ 0 \in \Delta $, then $ \Gamma \subseteq \Gamma + \Delta $, and~(1) follows.

	Property~(2) is a direct consequence of~(1), and~(3) is well-known for arbitrary finite groups.
\end{proof}

Let $G$ be a permutation group acting on a finite set $X$. An orbital graph for $(X,G)$ is a graph with vertex set $X$
whose arc set is an orbit of $G$ on $X \times X$; in general, this is a directed graph. The orbital graph with edge set
$\{ (x,x) : x \in X \}$ is called the diagonal orbital graph. The criterion of Higman~\cite{Higman} states that
a transitive permutation group $G$ acting on $X$ is primitive if and only if all non-diagonal orbital graphs are (strongly) connected,
see~\cite[Theorems~1.9 and~1.10]{Cameron} for the proof.

Assume that $G$ is an affine primitive permutation group with socle $V$.
The group $V$ is elementary abelian and acts regularly on $X$, so it can be identified with $X$ in a natural way.
Viewing $V$ as a vector space of dimension $d$ over the finite field $\mathbb{F}_{p}$
the group $G$ can be considered a subgroup of the affine general linear group $\mathrm{AGL}(V)$.
Therefore $G$ decomposes as a semidirect product $HV$ where $H$ is the stabilizer of $0 \in V$,
and $V$ is a faithful irreducible $\mathbb{F}_pH$-module.

In the case of an affine permutation group one can easily see that its orbital graphs are Cayley graphs.
Indeed, if $ \Delta $ is an orbit of $ H $ on $ V $, then the corresponding orbital graph
has arc set $ \{ (x, y) \in V \times V \mid x-y \in \Delta \} $, so $ \Delta $ is the connection set.
Notice that we obtain the diagonal orbital graph when $ \Delta $ is the zero orbit.

If $ x_1, \dots, x_k \in V $ is a directed path in an orbital graph corresponding to the orbit $ \Delta $
of $ H $ on $ V $, then $ x_1 - x_2 \in \Delta, \; x_2 - x_3 \in \Delta, \dots, x_{k-1} - x_k \in \Delta $,
and therefore $ x_1 - x_k \in (k-1) \cdot \Delta $. It follows that the (directed) diameter of the corresponding
orbital graph is equal to the minimal number $ n \geq 1 $ such that
$$ \{ 0 \} \cup (1 \cdot \Delta) \cup (2 \cdot \Delta) \cup \dots \cup (n \cdot \Delta) = V, $$
or, in other words, to the minimal $ n \geq 1 $ such that $ n \cdot (\Delta \cup \{ 0 \}) = V $.
Observe that such $ n $ always exists for a nonzero orbit $ \Delta $, since $ H $ acts irreducibly
on $ V $ and therefore $ \Delta $ spans $ V $ over $ \mathbb{F}_p $.

Denote by $ \diamd(V, H) $ the supremum of the diameters of non-diagonal orbital graphs of $ (X, G) $.
Let $ \Delta_1, \dots, \Delta_r $ be all the nonzero orbits of $ H $ on $ V $. Then
\begin{equation}\label{eqDiamd}
	\diamd(V, H) = \min \{ n \in \mathbb{N} \mid n \cdot (\Delta_i \cup \{ 0 \}) = V \text{ for all } i = 1, \dots, r \}.
\end{equation}
If we forget about the arc direction of an orbital graph, we can consider its undirected diameter; let
$ \diam(V, H) $ denote the supremum of the undirected diameters of non-diagonal orbital graphs of $ (X, G) $.
If $ \Delta $ is the connection set of some orbital graph, then the corresponding undirected graph has
connection set $ \Delta \cup -\Delta $, in particular, we obtain the formula
\begin{equation}\label{eqDiam}
	\diam(V, H) = \min \{ n \in \mathbb{N} \mid n \cdot (\Delta_i \cup -\Delta_i \cup \{ 0 \}) = V \text{ for all } i = 1, \dots, r \}.
\end{equation}
Clearly the orbits of the group $ H\langle -1 \rangle $ on $ V $ are $ \Delta_i \cup -\Delta_i $,
$ i = 1, \dots, r $, so $ \diam(V, H) = \diamd(V, H\langle -1 \rangle) $.

The following lemma shows that if all elements of some nontrivial subspace lie at distance at most $ m $ from $ 0 $
in an orbital graph, then the whole orbital graph has diameter at most $ dm $, where $ d $ is the dimension of $ V $.
\begin{lemma}\label{subgroupTrick}
	For any subgroup $A$ of $H$, any nontrivial subspace $W$ of $V$ and any nonzero vector $v$ in $V$,
	if $ W \subseteq m \cdot (v^A \cup \{ 0 \}) $ for some $ m $, then $ dm \cdot (v^H \cup \{ 0 \}) = V $.
\end{lemma}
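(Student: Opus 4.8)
The plan is to put $\Delta = v^H \cup \{0\}$ and exploit two features of this set: it is $H$-invariant, being the union of an $H$-orbit with the fixed vector $0$, and $H$ acts irreducibly on $V$. The hypothesis, together with $v^A \subseteq v^H$ (since $A \leq H$), gives $W \subseteq m\cdot(v^A\cup\{0\}) \subseteq m\cdot\Delta$. For a subspace $U$ of $V$ and $h\in H$ write $Uh = \{uh \mid u\in U\}$; note $Uh$ is again a subspace of the same dimension.

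First I would note that since $\Delta$ is $H$-invariant, so is $m\cdot\Delta$, and therefore for every $h\in H$ we have $Wh \subseteq (m\cdot\Delta)h = m\cdot\Delta$. Second, because $W$ is nontrivial and $H$ acts irreducibly on $V$, the subspace $\sum_{h\in H} Wh$ is a nonzero $H$-invariant subspace of $V$, hence equals $V$. A dimension count then yields elements $h_1,\dots,h_k\in H$ with $k\leq d$ such that $V = Wh_1 + \cdots + Wh_k$: starting from $Wh_1$, which has dimension at least $1$, whenever the current partial sum is a proper subspace, irreducibility supplies some $h_i$ with $Wh_i$ not contained in it, strictly increasing the dimension, so after at most $d$ steps the sum is all of $V$.

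Finally I would combine the two observations. Each summand satisfies $Wh_i \subseteq m\cdot\Delta$, so
$$ V = Wh_1 + \cdots + Wh_k \subseteq (m\cdot\Delta) + \cdots + (m\cdot\Delta) = (km)\cdot\Delta \subseteq (dm)\cdot\Delta, $$
where the last inclusion uses $km \leq dm$ together with $0\in\Delta$, via the Proposition on subset sums. Hence $(dm)\cdot\Delta = V$, that is, $dm\cdot(v^H\cup\{0\}) = V$, as claimed.

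I do not expect a genuine obstacle here; the argument is short once the right set $\Delta = v^H\cup\{0\}$ is fixed. The only points requiring a little care are the dimension-counting step guaranteeing that $d$ translates of $W$ already span $V$, and the elementary bookkeeping with the sums $n\cdot\Delta$, both of which hinge on $0$ belonging to $\Delta$.
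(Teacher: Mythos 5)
Your argument is correct and is essentially the same as the paper's: both rest on irreducibility to express $V$ as a sum of at most $d$ $H$-translates of something contained in $m\cdot(v^H\cup\{0\})$, then invoke monotonicity of sumsets (using $0$ in the connection set). The only cosmetic difference is that the paper picks a single nonzero $u\in W$ and translates the line $\mathbb{F}_p u$ to a basis $u^{h_1},\dots,u^{h_d}$, whereas you translate the full subspace $W$ and run a dimension-counting argument; the bound $dm$ and the structure of the proof are identical.
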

\begin{proof}
	Let $u \in W $ be a nonzero vector. The orbit $ u^H $ spans $ V $ over $ \mathbb{F}_{p} $,
	so there exist elements $ h_1, \dots, h_d \in H $ such that $ u^{h_1}, \dots, u^{h_d} $
	is a basis of $ V $ over $ \mathbb{F}_{p} $. Since $ \mathbb{F}_p u \subseteq W \subseteq m \cdot (v^A \cup \{ 0 \}) $
	and $ A \leq H $, we have
	$$ \mathbb{F}_{p}u^{h_1}, \dots, \mathbb{F}_{p}u^{h_d} \subseteq m \cdot (v^{H} \cup \{ 0 \}). $$
	Therefore $ V = \mathbb{F}_{p}u^{h_1} + \dots + \mathbb{F}_{p}u^{h_d} \subseteq dm \cdot (v^{H} \cup \{ 0 \}) $, and the claim is proved.
\end{proof}

By considering groups $ H\langle -1 \rangle $ and $ A\langle -1 \rangle $ we obtain an undirected
version of the above result: if $ W \subseteq m \cdot (v^A \cup -v^{A} \cup \{ 0 \}) $ for some $ m $,
then $ dm \cdot (v^H \cup -v^H \cup \{ 0 \}) = V $.

\section{Elementary diameter estimates}\label{secElem}

Let $HV \leq \mathrm{AGL}(V)$ be an affine primitive permutation group where $V$ is a vector space of dimension $d$
over the prime field $\mathbb{F}_{p}$ and $H \leq \mathrm{GL}(V)$. 
First we prove two general lower bounds for the diameter. We follow the proof of~\cite[Proposition~1.1]{BS}
for our second lower bound, see also~\cite[Theorem~3]{AB}.

\begin{proposition}\label{lowerBounds}
	For $ H > 1 $ let $ s $ be the size of the smallest nonzero orbit of $ H $ on $ V $. Then
	$$ \frac{\log |V|}{3\log |H|} \leq \frac{\log |V|}{\log (2s+1)} \leq \mathrm{diam}(V, H) $$
	and
	$$ \frac{1}{2}(|V|^{1/|H|}-1) \leq \frac{1}{2}(|V|^{1/s}-1) \leq \mathrm{diam}(V, H). $$
\end{proposition}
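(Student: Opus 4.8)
The plan is to reduce the problem to the combinatorial description of the diameter in~\eqref{eqDiam}. Let $\Delta$ be a nonzero orbit of $H$ on $V$ of minimal size $s$, and set $\Gamma = \Delta \cup -\Delta \cup \{0\}$, so that $|\Gamma| \le 2s+1$. Since the minimum in~\eqref{eqDiam} is taken over \emph{all} nonzero orbits, $\diam(V,H)$ is bounded below by the least $n$ for which $n \cdot \Gamma = V$ (such an $n$ exists because $\Delta$ spans $V$). Thus it suffices to bound this $n$ from below, and then to pass from $s$ to $|H|$ via the orbit--stabilizer relation $s = |H|/|H_v| \le |H|$.

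For the first chain of inequalities I would use only the crude estimate $|n \cdot \Gamma| \le |\Gamma|^n \le (2s+1)^n$. Hence $n \cdot \Gamma = V$ forces $(2s+1)^n \ge |V|$, i.e.\ $n \ge \log|V|/\log(2s+1)$, which is the right-hand inequality. The left-hand inequality then reduces to $\log(2s+1) \le \log(2|H|+1) \le 3\log|H|$, which holds because $2t+1 \le t^3$ for every integer $t \ge 2$ and $|H| \ge 2$ by hypothesis.

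For the second chain I would run the counting argument of~\cite[Proposition~1.1]{BS} (see also~\cite[Theorem~3]{AB}). Fix an enumeration $\Delta = \{\delta_1,\dots,\delta_s\}$. Any element of $n \cdot \Gamma$ is a sum of $n$ terms from $\Gamma$, hence can be written as $\sum_{i=1}^s a_i \delta_i$ with integers $a_i$ satisfying $\sum_{i=1}^s |a_i| \le n$; in particular each $a_i$ lies in the $(2n+1)$-element set $\{-n,\dots,n\}$, so $|n \cdot \Gamma| \le (2n+1)^s$. Therefore $n \cdot \Gamma = V$ forces $|V| \le (2n+1)^s$, that is, $n \ge \tfrac12(|V|^{1/s}-1)$; combined with $s \le |H|$ this yields both inequalities of the second chain. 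I do not expect a genuine obstacle here; the point needing the most care is that $\Delta$ is only a spanning set of $V$ and need not be an $\mathbb{F}_p$-basis, but $\mathbb{Z}$-linear dependences among the $\delta_i$ can only decrease $|n \cdot \Gamma|$ and so do not affect the bound $(2n+1)^s$.
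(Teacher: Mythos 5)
Your proposal is correct and follows essentially the same route as the paper: both chains rest on the reformulation of the diameter via $n \cdot (\Delta \cup -\Delta \cup \{0\}) = V$, with the first chain using the trivial cardinality bound $|V| \le (2s+1)^n$ and the second chain using the fact that every vector is a sum $\sum k_i x_i$ with $|k_i| \le n$, hence $|V| \le (2n+1)^s$. The only addition you make is to spell out the elementary inequality $2s+1 \le 2|H|+1 \le |H|^3$, which the paper leaves implicit.
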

\begin{proof}
	Let $ \Delta $ be a nonzero orbit of $ H $ on $ V $ of size $ s $.
	Set $ n = \diam(V, H) $ and recall that $ n \cdot (\Delta \cup -\Delta \cup \{ 0 \}) = V $.
	Therefore
	$$ |V| = |n \cdot (\Delta \cup -\Delta \cup \{ 0 \})| \leq |\Delta \cup -\Delta \cup \{ 0 \}|^n \leq (2s+1)^n, $$
	and the first displayed inequalities are proved.

	Set $ \Delta = \{ x_1, \dots, x_s \} $. Then every vector $ x \in V $ can be expressed as
	the sum $ x = k_1 x_1 + \dots + k_s x_s $, where $ |k_i| \leq n $. Therefore $ |V| \leq (2n+1)^s $,
	and the second result follows.
\end{proof}

Since $ \diam(V, H) \leq \diamd(V, H) $, the provided bounds are also lower bounds for the directed diameter.

Let $ \mathbb{F}_p^\times $ denote the multiplicative group of the finite field $ \mathbb{F}_p $,
and recall that we can identify $ \mathbb{F}_p^\times $ with the center of $ \GL(V) $ in a natural way.
The next result shows that the diameter is controlled by the intersection of the group $ H $ with $ \mathbb{F}_p^\times $,
essentially generalizing~\cite[Part~(i) of Lemma~3.1]{LMT}.

\begin{proposition}\label{centerIneq}
	We have
	$$\diamd(V,H) \leq \diamd(\mathbb{F}_{p},\mathbb{F}_{p}^{\times} \cap H) \cdot d \leq (p-1)d$$
	for all $ p $, and
	$$\diam(V,H) \leq \diam(\mathbb{F}_{p},\mathbb{F}_{p}^{\times} \cap H\langle -1 \rangle) \cdot d \leq (p-1)d/2,$$
	when $ p $ is odd.
\end{proposition}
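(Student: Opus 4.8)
The plan is to reduce the action on $V$ to the one-dimensional action of the scalar subgroup $C = \mathbb{F}_p^\times \cap H$, and then invoke Lemma~\ref{subgroupTrick}. First I would dispose of the trivial case $C = 1$, where both inequalities read $\diamd(V,H) \leq 0 \cdot d$, which is impossible unless we interpret things carefully; in fact the relevant statement is vacuous or the bound $(p-1)d$ is proved directly via Proposition~\ref{lowerBounds}'s companion estimate, so I would instead phrase the argument so that it gives the bound $(p-1)d$ unconditionally and the sharper bound when $C \neq 1$. The key observation is: for any nonzero $v \in V$, the orbit $v^H$ contains the set $v^C = \{\lambda v \mid \lambda \in C\}$, which lives inside the one-dimensional subspace $\mathbb{F}_p v$. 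Writing $\Delta_0 = v^C \cup \{0\} \subseteq \mathbb{F}_p v \cong \mathbb{F}_p$, the quantity $\diamd(\mathbb{F}_p, C)$ is by definition the least $n$ with $n \cdot (v^C \cup \{0\}) = \mathbb{F}_p v$ (using formula~\eqref{eqDiamd}, noting $C$ may have several nonzero orbits on $\mathbb{F}_p$, so we take the worst one), hence with $m = \diamd(\mathbb{F}_p, C)$ we get $\mathbb{F}_p v \subseteq m \cdot (v^H \cup \{0\})$.

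Now I would apply Lemma~\ref{subgroupTrick} with $A = C$, $W = \mathbb{F}_p v$, and this $m$: it yields $dm \cdot (v^H \cup \{0\}) = V$. Since $v$ was an arbitrary nonzero vector, every nonzero orbit $\Delta_i$ of $H$ on $V$ satisfies $dm \cdot (\Delta_i \cup \{0\}) = V$, so by~\eqref{eqDiamd} we conclude $\diamd(V,H) \leq dm = \diamd(\mathbb{F}_p, \mathbb{F}_p^\times \cap H) \cdot d$. The final inequality $\diamd(\mathbb{F}_p, \mathbb{F}_p^\times \cap H) \leq p-1$ is elementary: any nonzero orbit $\Delta$ of a subgroup of $\mathbb{F}_p^\times$ is nonempty, so $(p-1)\cdot(\Delta \cup \{0\})$ already covers all of $\mathbb{F}_p$ (indeed $|\mathbb{F}_p| = p \leq p-1+1$ and one checks directly that $n$ copies of any nonempty subset containing $0$ of size $\geq 2$... more simply, $\diamd(\mathbb{F}_p, T) \leq p-1$ because $\mathbb{F}_p$ has $p$ elements and $n \cdot (\Delta \cup \{0\}) \supsetneq (n-1)\cdot(\Delta\cup\{0\})$ strictly until it equals $\mathbb{F}_p$, forcing termination in at most $p-1$ steps). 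For the undirected statement, I would repeat the argument verbatim with $H$ replaced by $H\langle -1 \rangle$, using the undirected version of Lemma~\ref{subgroupTrick} recorded right after its proof, together with the identity $\diam(V,H) = \diamd(V, H\langle -1\rangle)$; when $p$ is odd, $-1 \in \mathbb{F}_p^\times$ so $\mathbb{F}_p^\times \cap H\langle -1\rangle \neq 1$, and the one-dimensional diameter is now at most $(p-1)/2$ since the connection set $\Delta \cup -\Delta \cup \{0\}$ is symmetric and covers $\mathbb{F}_p$ after at most $(p-1)/2$ additions (the extremal case being $\{0, \pm 1\}$, which needs exactly $(p-1)/2$ steps).

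The main obstacle, such as it is, is purely bookkeeping: one must be careful that $C = \mathbb{F}_p^\times \cap H$ can act on $\mathbb{F}_p v$ with several orbits, and that the scalar action of $C$ on the abstract field $\mathbb{F}_p$ is exactly what $\diamd(\mathbb{F}_p, C)$ measures — this is transparent once one fixes the identification of $\mathbb{F}_p v$ with $\mathbb{F}_p$ sending $v \mapsto 1$, under which $v^C$ becomes the subgroup $C \leq \mathbb{F}_p^\times$ itself. A second small point is handling $C = 1$ (equivalently $p = 2$ for the undirected case, or $H$ containing no nontrivial scalars in the directed case): there Lemma~\ref{subgroupTrick} cannot be applied with $A = C$, and one should instead note that the bound $(p-1)d$ already follows from the elementary estimate $\diamd(V,H) \leq \diam(V,H)\cdot\text{(something)}$ — or more directly, that when $p = 2$ the bound $\diamd(V,H) \le d$ follows since any nonzero orbit spans and $d$ generators suffice over $\mathbb{F}_2$, matching $(p-1)d = d$. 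I expect no serious difficulty beyond these edge cases.
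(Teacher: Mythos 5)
Your core argument coincides with the paper's: for a nonzero $v$, the scalar orbit $v^C$ (with $C = \mathbb{F}_p^\times \cap H$) sits inside the line $\mathbb{F}_p v$, so $m := \diamd(\mathbb{F}_p, C)$ copies of $v^C \cup \{0\}$ fill that line, and Lemma~\ref{subgroupTrick} with $W = \mathbb{F}_p v$, $A = C$ gives $\diamd(V,H) \le dm$; the further bound $m \le p-1$, and the undirected version via $\diam(V,H) = \diamd(V, H\langle -1 \rangle)$ with the extremal case $\{0,\pm 1\}$ giving $(p-1)/2$, are also exactly the paper's steps. However, your treatment of the edge case $C = 1$ rests on a misconception: you assert that $\diamd(\mathbb{F}_p, C) = 0$ when $C$ is trivial (``both inequalities read $\diamd(V,H) \le 0\cdot d$''). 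That is false. The orbits of the trivial group on $\mathbb{F}_p$ are singletons, the non-diagonal orbital graphs are directed $p$-cycles, and so $\diamd(\mathbb{F}_p, \langle 1 \rangle) = p-1$; the displayed inequality in the statement then reads $\diamd(V,H) \le (p-1)d$ and is informative, not vacuous. Likewise your claim that Lemma~\ref{subgroupTrick} ``cannot be applied with $A = C$'' when $C$ is trivial is wrong: with $A = 1$ and $m = p-1$ one has $(p-1)\cdot(\{v\}\cup\{0\}) = \mathbb{F}_p v$, so the lemma applies verbatim. As a result, the separate argument you offer for the special case only addresses $p = 2$ and leaves $C = 1$, $p > 2$ unhandled; but this is a gap created by the unnecessary case split, not a gap in the method --- the uniform argument, which is the paper's, covers $C = 1$ automatically.
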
 
\begin{proof}
	Set $ A = \mathbb{F}_p^\times \cap H $ and let $ v \in V $ be a nonzero vector.
	Since $ A $ is a nonzero orbit of $ A $ acting on $ \mathbb{F}_p $, we have
	$ n \cdot (A \cup \{ 0 \}) = \mathbb{F}_p $, where $ n = \diamd(\mathbb{F}_p, A) $.
	Therefore $ n \cdot (v^A \cup \{ 0 \}) $ is a nontrivial subgroup of $ V $,
	and by Lemma~\ref{subgroupTrick}, we have $ dn \cdot (v^H \cup \{ 0 \}) = V $.
	Thus $ \diamd(V, H) \leq \diamd(\mathbb{F}_p, A) \cdot d $.
	Now, $ \diamd(\mathbb{F}_p, A) \leq \diamd(\mathbb{F}_p, \langle 1 \rangle) $. 
	Since non-diagonal orbital graphs of the trivial group acting on $ \mathbb{F}_p $ are directed cycles
	of length $ p $, we obtain $ \diamd(\mathbb{F}_p, \langle 1 \rangle) = p-1 $
	and the first displayed inequalities are proved.

	To prove the second part, recall that $ \diam(V, H) = \diamd(V, H\langle -1 \rangle) $,
	therefore $ \diam(V, H) \leq \diam(\mathbb{F}_p, \mathbb{F}_p^\times \cap H\langle -1 \rangle) \cdot d $ by the previous paragraph.
	Clearly $ \diam(\mathbb{F}_p, \mathbb{F}_p^\times \cap H\langle -1 \rangle) \leq \diam(\mathbb{F}_p, \langle -1 \rangle) $.
	Finally, we have $ \diam(\mathbb{F}_p, \langle -1 \rangle) = (p-1)/2 $ for odd $ p $, since orbital graphs of
	$ \langle -1 \rangle $ acting on $ \mathbb{F}_p $ are undirected cycles of length~$ p $.
\end{proof}

In Section~\ref{secExamples} we will show that for all $ d $ and all odd $ p $
the inequalities on undirected diameter presented in Proposition~\ref{centerIneq} are sharp. Notice
that the bound on undirected diameter does not apply when $ p = 2 $, as $ \diam(\mathbb{F}_2, \langle 1 \rangle) = 1 $.

\section{Proof of Theorem~\ref{mainExplicit}}\label{secExplicit}

In this section we prove Theorem~\ref{mainExplicit} which is the main technical result
required for the proof of Theorem~\ref{main}.

Let $ H $ be an irreducible subgroup of $ \GL(V) $, where $ V $ has dimension $ d $
over the prime field $ \mathbb{F}_p $, and let $A$ be a nontrivial abelian $p'$-subgroup of $H$.
The vector space $V$ is a completely reducible $\mathbb{F}_{p}A$-module by Maschke's theorem.
We can write $ V = V_1 \oplus \dots \oplus V_k $, $ k \leq d $, where the $ V_i $ are irreducible $ \mathbb{F}_{p}A $-modules.
Write $ |V_i| = p^{f_i} $, $ i = 1, \dots, k $, for some integers $ f_i $.
Let $ A_i \leq \mathrm{GL}(V_i) $ be the group induced on $ V_i $ by~$ A $.
Since $ A $ is abelian, it induces a multiplicative group of a finite field
on each $ V_i $, so we may assume that $ A_i \leq \mathbb{F}^\times_{p^{f_i}} $ for $ i = 1, \dots, k $.

We have $ |A_i| \geq |A|^{1/k} $ for some $ i $.
Without loss of generality we may assume that for some $ j \geq 1 $ we have $ |A_i| \geq |A|^{1/k} $ for $ i = 1, \dots, j $,
and $ |A_i| < |A|^{1/k} $ for $ i = j+1, \dots, k $.
Nonzero orbits of $ A_i $ on $ V_i $ have equal sizes, and in particular, for $ i = 1, \dots, j $ nonzero orbits
of $ A_i $ on $ V_i $ have size at least $ |A|^{1/k} $, while for $ i = j+1, \dots, k $ orbit sizes are less than $ |A|^{1/k} $.

If the subgroup $ A $ is normal in $ H $, then all $ \mathbb{F}_{p}A $-modules $ V_i $, $ i = 1, \dots, k $, are isomorphic,
in particular, $ j = k $ in this case.

Every vector $ v \in V $ can be uniquely written as $ v = v_1 + \dots + v_k $, where $ v_i \in V_i $, $ i = 1, \dots, k $.
We say that $ v_i $ is the projection of $ v $ on $ V_i $.

\begin{lemma}\label{removeSmall}
	Let $ v $ be a nonzero vector from $ V $ and set $\Delta = v^{H} \cup  -v^{H} \cup \{ 0 \}$.
	If $ s \leq k-j $, then there exists $ w \in 2^s \cdot \Delta $ such that
	$ w \in V_1 \oplus \dots \oplus V_{k-s} $ and $ w $ has a nonzero projection on $ V_1 $.
\end{lemma}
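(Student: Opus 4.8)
The plan is to prove Lemma~\ref{removeSmall} by induction on $s$. For $s=0$ the statement is trivial: any nonzero vector $v \in v^H \subseteq \Delta$ has a nonzero projection on some $V_i$, and after reindexing the $V_{j+1},\dots,V_k$ (which is harmless since these small-orbit summands play symmetric roles up to relabeling) we may assume this projection is on $V_1$; then $w = v$ lies in $\Delta = 2^0 \cdot \Delta$ and in $V_1 \oplus \dots \oplus V_k$. Actually I should be slightly careful: the statement wants $w$ with nonzero projection on $V_1$ specifically, so for the base case I would pick $w=v$ and note $v$ has nonzero projection on \emph{some} summand, then reindex. The real content is the inductive step.

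**The inductive step.**

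Suppose $s+1 \le k-j$ and we have $w \in 2^s \cdot \Delta$ with $w \in V_1 \oplus \dots \oplus V_{k-s}$ and nonzero projection $w_1$ on $V_1$. Write $w = w_1 + \dots + w_{k-s}$. If $w_{k-s} = 0$ we are already done (take the same $w$, since $2^s \cdot \Delta \subseteq 2^{s+1}\cdot\Delta$ as $0 \in \Delta$). So assume $w_{k-s} \ne 0$. The index $k-s$ satisfies $k-s > j$ (since $s+1 \le k-j$ forces $s < k-j$, i.e. $k-s > j$), so $V_{k-s}$ is one of the "small" summands: the group $A_{k-s}$ induced on it has order $|A_{k-s}| < |A|^{1/k}$, hence nonzero $A_{k-s}$-orbits on $V_{k-s}$ have size less than $|A|^{1/k}$. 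The key idea: act on $w$ by elements of $A$. Since $A$ acts on $V$ preserving each $V_i$, and $w^a$ for $a \in A$ again lies in $v^H$ (because $A \le H$ and $w \in v^H$ implies... wait, $w \in 2^s\cdot\Delta$, not $v^H$). Let me reconsider — I want to form $w - w^a$ for suitable $a \in A$, killing the last coordinate while keeping the first nonzero.

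**The averaging/pigeonhole argument.**

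Here is the mechanism I expect the proof to use. The projection of $w$ onto $V_{k-s}$ is $w_{k-s} \ne 0$, and $A_{k-s}$ acts on $V_{k-s}$ with orbit of $w_{k-s}$ having size $< |A|^{1/k}$; meanwhile the projection $w_1$ onto $V_1$ is nonzero and $|A_1| \ge |A|^{1/k}$. Consider the action of $A$ on the pair $(w_1, w_{k-s}) \in V_1 \times V_{k-s}$ via the induced actions $A_1, A_{k-s}$. The orbit of $w_{k-s}$ under $A$ (equivalently $A_{k-s}$) has size $t_{k-s} < |A|^{1/k} \le |A_1|$. Since $|A_1| > t_{k-s}$, the kernel of the map $A \to A_{k-s}$, or rather the stabilizer considerations, show there exist two elements $a, b \in A$ with $w_{k-s}^a = w_{k-s}^b$ but $w_1^a \ne w_1^b$: indeed, partition $A$ into $\le t_{k-s}$ fibers according to the value of $w_{k-s}^a$; the largest fiber has size $\ge |A|/t_{k-s}$, and if on this fiber the map $a \mapsto w_1^a$ were constant we could... hmm, I need the first coordinate to actually vary. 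Better: among elements with $w_{k-s}^a$ fixed to a given value, the values $w_1^a$ range over a coset of (the image of) the stabilizer of $w_{k-s}$ in $A_1$; since $|A_1|/t_1 \cdot$ — let me just say: the stabilizer $S$ of $w_{k-s}$ in $A$ has index $t_{k-s} < |A_1|$, hence $S$ does not act trivially on $w_1$ (its image in $A_1$ is nontrivial since $|A_1| > [A:S]$ would be violated otherwise — more precisely $|\mathrm{image\ of\ }S\mathrm{\ in\ }A_1| \ge |A_1|/t_{k-s} > 1$). So pick $a \in S$ with $w_1^a \ne w_1$, and set $w' = w - w^a$. Then $w' \in 2^s\cdot\Delta - 2^s\cdot\Delta \subseteq 2^{s+1}\cdot\Delta$ (using $-\Delta = \Delta$... here $\Delta = v^H \cup -v^H \cup\{0\}$ is symmetric, and $2^s\cdot\Delta$ is symmetric too, so $2^s\cdot\Delta - 2^s\cdot\Delta \subseteq 2^{s+1}\cdot\Delta$). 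Moreover $w'$ has zero projection on $V_{k-s}$ and on all $V_i$ with $i > k-s$, so $w' \in V_1 \oplus\dots\oplus V_{k-s-1}$, and its projection on $V_1$ is $w_1 - w_1^a \ne 0$. This completes the induction.

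**The main obstacle.**

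The delicate point — and the step I would spend the most care on — is the pigeonhole/stabilizer argument guaranteeing an element $a \in A$ that fixes the $V_{k-s}$-projection while moving the $V_1$-projection. This rests precisely on the size dichotomy $|A_1| \ge |A|^{1/k} > |A_{k-s}|$, and one must phrase it correctly in terms of orbit sizes (the orbit of $w_{k-s}$ under $A_{k-s}$ has size dividing $|A_{k-s}| < |A|^{1/k}$, hence its stabilizer in $A$ has index $< |A_1|$, hence maps onto a nontrivial subgroup of $A_1$, which therefore moves the nonzero vector $w_1$ since $A_1 \le \mathbb{F}_{p^{f_1}}^\times$ acts semiregularly — fixed-point-freely — on nonzero vectors). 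The semiregularity of $A_1$ on $V_1 \setminus\{0\}$ is the reason "nontrivial subgroup of $A_1$" upgrades to "moves $w_1$", and this is where the structure $A_i \le \mathbb{F}_{p^{f_i}}^\times$ established before the lemma is essential. I should also make sure the bookkeeping on which summand gets eliminated is consistent — we strip off summands $V_{k}, V_{k-1}, \dots$ in order, all of which lie among the "small" indices $j+1,\dots,k$ precisely because $s \le k-j$, so we never need to disturb $V_1$ or any large summand.
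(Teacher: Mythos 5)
Your inductive step is essentially the paper's argument and is correct: for $s+1 \le k-j$ you have $k-s>j$, so the orbit of $w_{k-s}$ under $A$ is shorter than $|A|^{1/k}$, while the orbit of $w_1 \ne 0$ under $A$ has length $|A_1|\ge |A|^{1/k}$ (by semiregularity of $A_1\le\mathbb F_{p^{f_1}}^{\times}$ on $V_1\setminus\{0\}$); hence the stabilizer of $w_{k-s}$ in $A$ is not contained in the stabilizer of $w_1$, giving an $a\in A$ fixing $w_{k-s}$ and moving $w_1$, and $w-w^a\in 2^{s+1}\cdot\Delta$ does the job. The paper phrases the pigeonhole via orbit-length comparison rather than via images of stabilizers, but it is the same mechanism. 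Your observation that the case $w_{k-s}=0$ is handled by monotonicity of $n\cdot\Delta$ is also fine.

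However, your base case is wrong. You take $w=v$, note that $v$ has a nonzero projection on \emph{some} summand, and propose to ``reindex'' so that this summand is $V_1$. This fails: $V_1,\dots,V_j$ are precisely the summands on which $A$ has large induced group, and $V_{j+1},\dots,V_k$ are those with small induced group; the labeling is fixed by the orbit-size dichotomy and cannot be permuted to move a small summand into position $1$. Worse, $v$ itself may well lie entirely in $V_{j+1}\oplus\dots\oplus V_k$, in which case no relabeling among large summands helps either, so taking $w=v$ is simply not an option. The correct argument is different in kind: since $H$ is irreducible, the nonzero orbit $v^H$ spans $V$, so $\Delta$ cannot be contained in the proper subspace $V_2\oplus\dots\oplus V_k$; hence \emph{some} element $w\in\Delta=2^0\cdot\Delta$ has nonzero projection on $V_1$. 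This use of irreducibility is the missing idea in your base case.
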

\begin{proof}
	We will prove the statement by induction on $ s $.
	Suppose that $ s = 0 $. Since $ \Delta $ spans $ V $, it cannot lie inside the proper subspace $ V_2 \oplus \dots \oplus V_k $.
	Therefore there exists a vector $ w \in 1 \cdot \Delta $ such that $ w $ has a nonzero projection on $ V_1 $.
	
	Suppose that $ s > 0 $, so, in particular, $ A $ is not normal in $ H $.
	By the inductive hypothesis, there exists some vector $ u \in 2^{s-1} \cdot \Delta $ lying in
	$ V_1 \oplus \dots \oplus V_{k-s+1} $ and having a nonzero projection on $ V_1 $, that is,
	$ u = u_1 + \dots + u_{k-s+1} $ where $ u_i \in V_i $ for every $ i = 1, \dots, k-s+1 $, and $ u_1 \neq 0 $.
	Recall that the length of the $ A $-orbit of $ u_{k-s+1} $ is strictly smaller than $ |A|^{1/k} $,
	in particular, it is smaller than the length of the $ A $-orbit of $ u_1 $. Therefore there exists some $ a \in A $
	with $ u_1^a \neq u_1 $ and $ u_{k-s+1}^a = u_{k-s+1} $.
	The vector $ w = u - u^a $ has a nonzero projection on $ V_1 $ and lies in $ V_1 \oplus \dots \oplus V_{k-s} $.
	It is left to notice that $ w \in 2^{s-1} \cdot \Delta - 2^{s-1} \cdot \Delta = 2^s \cdot \Delta $, 
	where the last equality follows from $ -\Delta = \Delta $. The inductive argument is over.
\end{proof}

We need the following result of Cochrane and Cipra. 	

\begin{proposition}[{\cite[Theorem~1.2]{CC}}]
	\label{CC}
	Let $ M $ be a subgroup of the multiplicative group $ \mathbb{F}_{q}^\times $ of the finite field $ \mathbb{F}_q $.
	If $ M $ generates $ \mathbb{F}_{q} $ additively and $ |M| > 1 $, then we have
	$ n \cdot M = \mathbb{F}_{q} $ for every $ n \geq 633 (2(q-1)/|M|)^{\frac{\log 4}{\log |M|}} $.
\end{proposition}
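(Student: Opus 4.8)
Proposition~\ref{CC} is quoted verbatim from~\cite{CC} and is used below as a black box, so no proof is required here; for orientation I sketch the line one would take, which is the standard approach to Waring-type problems over finite fields. The plan has a reduction step followed by a split on the size of $ M $. First I would use the monotonicity of the sets $ n\cdot M $ recorded at the beginning of Section~\ref{secPrelim}: it is enough to produce a \emph{single} integer $ n_0 $ with $ n_0\cdot M=\mathbb{F}_q $ and $ n_0\leq 633\,(2(q-1)/|M|)^{\log 4/\log|M|} $, and then the assertion holds for every $ n\geq n_0 $. Two features of a multiplicative subgroup drive everything: every sumset $ n\cdot M $ is again invariant under multiplication by $ M $, and the product of two elements of $ M $ lies in $ M $, so additive information about sumsets of $ M $ can be recycled through a multiplicative (sum-product) estimate.

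For $ M $ large, say $ |M|\geq q^{1/2} $, one argues directly with Gauss sums. The number of representations of $ c\in\mathbb{F}_q $ as $ a_1+\dots+a_n $ with all $ a_i\in M $ is an $ n $-fold additive convolution of the indicator $ 1_M $, hence equals $ q^{-1}|M|^n $ plus an error controlled by the nontrivial Fourier coefficients of $ 1_M $; by the Weil bound for Gauss sums each such coefficient has absolute value at most $ q^{1/2} $, so the error is at most $ q^{n/2} $. Thus $ n\cdot M=\mathbb{F}_q $ once $ |M|^n>q^{1+n/2} $, and when $ |M|\geq q^{1/2} $ this holds for some $ n_0 $ bounded in terms of $ \log q/\log|M| $ only, a quantity which in this range is itself bounded, comfortably inside the asserted estimate.

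The substance is the case $ |M|<q^{1/2} $, which I would handle by a bootstrap. Put $ S_0=M $ and $ S_{j+1}=S_j+S_j $, so that $ S_j\subseteq 2^j\cdot M $ and each $ S_j $ is again invariant under multiplication by $ M $. A Glibichuk-type sum-product inequality for $ M $-invariant sets shows that, as long as $ |S_j| $ stays below order $ q^{1/2} $, the sumset grows superlinearly, roughly $ |S_{j+1}|\geq c\,|S_j|\,|M|^{1/2} $ for an absolute constant $ c $; what makes this work is precisely the hypothesis that $ M $ generates $ \mathbb{F}_q $ additively, ruling out the only obstruction, namely that $ M $ (and hence the $ S_j $) be trapped in a proper subfield. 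After about $ r\asymp \log q/\log|M| $ doublings one obtains an $ M $-invariant set $ S=S_r\subseteq 2^r\cdot M $ with $ |S|^2 $ of order at least $ 2q $. One then applies Glibichuk's lemma once more: every element of $ \mathbb{F}_q $ is a sum of $ O(1) $ products $ ab $ with $ a,b\in S $, and each such product lies in $ 2^{2r}\cdot M $ because $ S\subseteq 2^r\cdot M $ and products of elements of $ M $ stay in $ M $. Hence $ \mathbb{F}_q=n_1\cdot M $ with $ n_1=O(2^{2r})=4^{O(\log q/\log|M|)}=q^{O(1/\log|M|)} $, and unwinding the absolute constants throughout (the factor $ 2 $ lost at each doubling, the constant in Glibichuk's lemma, and the passage from $ q $ to $ 2(q-1)/|M| $) yields a bound of precisely the shape $ 633\,(2(q-1)/|M|)^{\log 4/\log|M|} $.

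The main obstacle, and essentially the whole content, is the sum-product step in the small-subgroup regime: proving that sumsets of $ M $-invariant subsets of $ \mathbb{F}_q $ grow by a power of $ |M| $ until they have size of order $ q^{1/2} $, with an \emph{explicit} absolute constant and under no hypothesis beyond $ M $ generating $ \mathbb{F}_q $ additively. The large-subgroup case and the reduction to a single $ n_0 $ are routine; it is this growth estimate, together with the bookkeeping required to reach the clean closed form with exponent $ \log 4/\log|M| $, that one must supply, and for the precise constants I would simply defer to~\cite{CC}.
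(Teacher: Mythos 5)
You are right that the paper gives no proof of Proposition~\ref{CC}: it is cited verbatim from Cochrane and Cipra~\cite{CC} and used as a black box, exactly as you say. Your orienting sketch of the sum-product/Glibichuk bootstrap that underlies the Cochrane--Cipra argument is a fair high-level description of their method, but since neither you nor the paper supplies a proof here, there is nothing to compare beyond agreement on citing the result.
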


Recall that $ A $ induces a group $ A_i $ on each $ V_i $, $ i = 1, \dots, j $,
and $ |A_i| \geq |A|^{1/k} $. We have $ n_{i} \cdot A_i = \mathbb{F}_{p^{f_i}} $ for all $ i = 1, \dots, j $
provided that $n_{i} \geq 160 \cdot {(2|V_{i}|)}^{\frac{k \log 4}{\log |A|}}$, by Proposition~\ref{CC}.
Let $N_{i}$ be the lower integer part of $160 \cdot {(2|V_{i}|)}^{\frac{k \log 4}{\log |A|}}$ and set $N = 1 + \max_i N_i $.
By definition, $ N \cdot A_i = V_i $ for all $ i = 1, \dots, j $.

Let $ v $ be an arbitrary vector from $ U = V_1 \oplus \dots \oplus V_j $.
It has a unique decomposition of the form $ v = v_1 + \dots + v_j $, where $ v_i \in V_i $, $ i = 1, \dots, j $.
Let $ l(v) $ denote the number of nonzero projections of $ v $, i.e.\
$$ l(v) = |\{ i \in \{ 1, \dots, j \} \mid v_i \neq 0 \}|. $$

\begin{lemma}\label{removeLarge}
	If $v \in U$, $ v \neq 0 $, then $ d(4N)^{l(v)+1} \cdot (v^{H} \cup \{ 0 \}) = V $.
\end{lemma}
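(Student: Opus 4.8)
The plan is to prove Lemma~\ref{removeLarge} by induction on $l(v)$, peeling off one nonzero projection at a time. The base case $l(v)=1$ should come from the construction before the lemma: if $v$ has a single nonzero projection, say $v_i \in V_i$ for some $i \le j$, then $v^H$ contains $v$, and $N \cdot A_i = V_i$ gives $N \cdot (v^A \cup \{0\})$ (hence $N \cdot (v^H \cup \{0\})$) a subspace meeting $V_i$ nontrivially; applying Lemma~\ref{subgroupTrick} with $W$ this subspace yields $dN \cdot (v^H \cup \{0\}) = V$, which is covered by $d(4N)^{2} \cdot (v^H \cup \{0\})$. In fact I expect one wants $W \subseteq N\cdot(v^A\cup\{0\})$ with $W$ a nontrivial subspace, and then $dN \le d(4N)^{l(v)+1}$ does the job.

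For the inductive step, suppose $l(v) = t > 1$, so $v = v_1 + \dots + v_j$ has $t$ nonzero components, say with indices in a set $S$, $|S| = t$. Pick two indices $a, b \in S$ with $|A_a| \le |A_b|$ (so the $A$-orbit of $v_a$ is no longer than that of $v_b$; recall $A$ acts on each $V_i$ as a subgroup of $\mathbb F_{p^{f_i}}^\times$, and all $V_i$, $i \le j$, have "large" orbits, but their sizes may still differ). Then there is $g \in A$ fixing $v_a$ but moving $v_b$, and $v - v^g$ lies in $2\cdot(v^A \cup -v^A)$, kills the $a$-component, and — crucially — retains a nonzero $b$-component, so it is a nonzero vector $u$ with $l(u) \le t-1$ lying in a smaller sum of the $V_i$. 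Here one must be slightly careful: forming $v - v^g$ might accidentally zero out other components too, but that only helps, since it lowers $l$ further; and it might move components outside $S$ — no, since $g$ fixes $V_i$ setwise, components outside $S$ stay zero. Now $u \in 2\cdot(v^H \cup -v^H) = 2\cdot(v^H \cup \{0\})$ once we note $-\Delta$ issues; more precisely $v^g \in v^H$ so $v - v^g \in v^H - v^H \subseteq 2\cdot(v^H \cup -v^H \cup \{0\})$, and to stay within $v^H \cup \{0\}$ (no negatives) one instead argues as in the proof of Lemma~\ref{removeSmall}: $v - v^g$ can be absorbed into $2\cdot(v^H\cup\{0\})$ after a final application of Lemma~\ref{subgroupTrick}, or one simply tracks $(v^H\cup\{0\})$ sums directly. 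By induction, $d(4N)^{l(u)+1}\cdot(u^H \cup \{0\}) = V$; combining, $u \in 2\cdot(v^H \cup \{0\}) + \text{(something)}$, so $V = d(4N)^{l(u)+1}\cdot 2 \cdot(v^H \cup \{0\}) \subseteq d(4N)^{t}\cdot(v^H\cup\{0\}) \le d(4N)^{t+1}\cdot(v^H\cup\{0\})$, using $l(u)+1 \le t$ and the factor $2 \le 4N$.

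I should double-check one subtlety: $u = v - v^g$ need not itself be a point of $v^H$, so "$d(4N)^{l(u)+1}\cdot(u^H\cup\{0\}) = V$" must be converted back to a statement about $v^H$. The clean way is: $u \in 2\cdot(v^H\cup -v^H\cup\{0\})$, hence $m\cdot(u^H\cup\{0\}) \subseteq m\cdot\big(2\cdot(v^H\cup -v^H\cup\{0\})\big)^H = 2m\cdot(v^H\cup -v^H\cup\{0\})$ since $u^H \subseteq 2\cdot(v^H\cup -v^H\cup\{0\})$ as the latter set is $H$-invariant. Then work with $H\langle -1\rangle$ throughout (as the lemma statement's $\diamd$ vs $\diam$ bookkeeping already suggests is harmless here) — or observe that since $v^H$ spans $V$ and orbits are symmetric enough, the negatives contribute only a bounded multiplicative loss already absorbed in the constant $4$. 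The main obstacle is exactly this bookkeeping — ensuring the orbit one lands on after the subtraction trick is controlled by $v^H$ with only the stated loss of a factor $4$ per step and a single factor $d$ at the end — rather than any deep structural input; the arithmetic engine (Cochrane–Cipra, Proposition~\ref{CC}) and the reduction engine (Lemma~\ref{subgroupTrick}) are already in hand.
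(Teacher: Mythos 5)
There is a genuine gap. Your inductive step adapts the subtraction trick from Lemma~\ref{removeSmall}: pick a coordinate $a$ with small $A$-orbit, find $g\in A$ fixing $v_a$ but moving some $v_b$, and pass to $v-v^g$. This fails here for two reasons. First, in Lemma~\ref{removeLarge} all the surviving coordinates $1,\dots,j$ have \emph{large} $A$-orbits (size $\ge |A|^{1/k}$), and there is no guaranteed strict inequality $|A_a|<|A_b|$ to exploit; if $|A_a|=|A_b|$ it may well happen that $\ker(A\to A_a)=\ker(A\to A_b)$, and then no $g\in A$ fixes $v_a$ while moving $v_b$. In Lemma~\ref{removeSmall} the separation $|A_i|\ge |A|^{1/k}> |A_{i'}|$ between large and small components is exactly what makes that argument run, and it is unavailable here. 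Second, and more importantly, $v-v^g$ introduces $-v^H$, so at best you control $m\cdot(v^H\cup -v^H\cup\{0\})$. But the lemma is stated for $v^H\cup\{0\}$ (no negatives), precisely because the paper later needs it to bound the \emph{directed} diameter $\diamd(V,H)$ in the normal case of Theorem~\ref{mainExplicit}; switching to $H\langle -1\rangle$, as you suggest, would only recover an undirected bound and would not prove the lemma as stated. This is not a ``bounded loss absorbed in the constant $4$'': the factor $4$ in the paper has a different origin.

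The paper's proof avoids both problems by replacing the subtraction trick with a pigeonhole argument using only \emph{sums}. It fixes a single index $i$ with $v_i\ne 0$, notes that the projection of $N\cdot(v^A\cup\{0\})$ onto $V_i$ is all of $V_i$, and then either $N\cdot(v^A\cup\{0\})$ is already a subgroup of size $p^{f_i}$ (done by Lemma~\ref{subgroupTrick}), or $|2N\cdot(v^A\cup\{0\})|>p^{f_i}$, in which case two vectors $u,u'$ in $2N\cdot(v^A\cup\{0\})$ share the same $V_i$-projection $u_i$; surjectivity onto $V_i$ then supplies $w\in 2N\cdot(v^A\cup\{0\})$ with $V_i$-projection $-u_i$, and at least one of $u+w$, $u'+w$ is a nonzero vector in $4N\cdot(v^A\cup\{0\})\subseteq 4N\cdot(v^H\cup\{0\})$ with $l$ strictly smaller than $l(v)$. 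No negatives and no orbit-length comparison are needed, and the factor $4N$ per step gives exactly the claimed exponent. You would need this pigeonhole idea (or an equivalent negation-free device) to make the inductive step go through.
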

\begin{proof}
	We use induction on $ l(v) $. Suppose that $ l(v) = 1 $. Then $ v $ lies in some $ V_i $
	for $ i \in \{ 1, \dots, j \} $, and hence $ N \cdot v^A = V_i $ by the definition of $N$.
	Therefore $ dN \cdot (v^{H} \cup \{0\}) = V $ by Lemma~\ref{subgroupTrick}.
	
	Suppose that $ l(v) > 1 $ and $ v_i \neq 0 $ for some $ i \in \{ 1, \dots, j \} $.
	As $ N \cdot v_i^A = V_i $, the projection of $ N \cdot (v^{A} \cup \{0\}) $ on $ V_i $ is equal to $ V_i $. 
	In particular, $ |N \cdot (v^{A} \cup \{0\})| \geq p^{f_i} $.
	
	If $ |2N \cdot (v^{A} \cup \{0\})| = p^{f_i} $, then $ 2N \cdot (v^{A} \cup \{ 0\}) = N \cdot (v^{A} \cup \{0\}) $
	and $ N \cdot (v^{A} \cup \{0\}) $ is a subgroup of $ V $. Therefore $dN \cdot (v^{H} \cup \{0\}) = V$ by Lemma~\ref{subgroupTrick}.  
	
	Assume that $ |2N \cdot (v^{A} \cup \{0\})| > p^{f_i} $. There exist two distinct vectors $ u, u' \in 2N \cdot (v^{A} \cup \{0\}) $
	with equal projections on $ V_i $, i.e.\ $ u_i = u_i' $. Since the projection of $ 2N \cdot (v^A \cup \{ 0 \}) $
	on $ V_i $ is equal to $ V_i $, there exists a vector $ w \in 2N \cdot (v^{A} \cup \{0\}) $
	with projection $ -u_i $ on $ V_i $. At least one of the vectors $ u+w $ or $ u'+w $ is nonzero; without loss of generality,
	$ u+w \neq 0 $. Since $ u+w $ has zero projection on $ V_i $, we have $ l(u + w) < l(v) $.
	As $$u + w \in 4N \cdot (v^{A} \cup \{0\}) \subseteq 4N \cdot (v^{H} \cup \{0\}),$$ the inductive hypothesis gives
	\begin{multline*}
		V = d(4N)^{l(u + w) + 1} \cdot ((u+w)^{H} \cup \{0\}) \subseteq d(4N)^{l(u+w) + 1} \cdot 4N \cdot (v^{H} \cup \{0\}) \subseteq\\
		\subseteq d(4N)^{l(v) + 1} \cdot (v^{H} \cup \{0\})
	\end{multline*}
	and the claim is proved.
\end{proof}

Assume first that $A$ is normal in $H$. In this case $j = k$ and $U = V$.
Each $V_i$ has equal size and $|V_{i}|^{k} = |V|$. Let $v$ be an arbitrary nonzero vector in $V$.
We have $l(v) \leq k \leq d$, therefore
$$\overrightarrow{\mathrm{diam}}(V,H) \leq d(4N)^{k+1} \leq d (640 \cdot {(2^{k}|V|)}^{\frac{\log 4}{\log |A|}} + 4)^{k+1} <$$
$$< d \cdot 644^{k+1} \cdot {(2^{k}|V|)}^{\frac{(k+1)\log 4}{\log |A|}} <  d \cdot 2576^{(k+1)k} \cdot {|V|}^{\frac{(k+1)\log 4}{\log |A|}}$$
by Lemma~\ref{removeLarge} and Equation~(\ref{eqDiamd}). Thus part~(2) of Theorem~\ref{mainExplicit} is proved.

Assume now that $A$ is not necessarily a normal subgroup of $H$. Let $v$ be an arbitrary nonzero vector in $V$.
By Lemma~\ref{removeSmall}, we have a nonzero vector $w$ in $U$ such that $w \in 2^{k-1} \cdot (v^{H} \cup -v^{H} \cup \{ 0\})$.
Thus
$$V =  d (4N)^{l(w)+1} \cdot (w^{H} \cup \{ 0 \}) \subseteq d (4N)^{l(w)+1} \cdot 2^{k-1} \cdot (v^{H} \cup -v^{H} \cup \{ 0\})$$
by Lemma~\ref{removeLarge}. It follows from Equation~(\ref{eqDiam}) that
\begin{multline*}
	\mathrm{diam}(V,H) \leq d \cdot (4N)^{k+1} \cdot 2^{k-1} \leq
	2d \cdot 8^{k} \cdot 161^{k+1} \cdot {(2|V|)}^{\frac{k (k+1)\log 4}{\log |A|}} <\\
	322 d \cdot 1288^{k} \cdot {(2|V|)}^{\frac{k (k+1)\log 4}{\log |A|}} <
	322 d \cdot 144^{k(k+1)} \cdot {|V|}^{\frac{k (k+1)\log 4}{\log |A|}},
\end{multline*}
and part~(1) of Theorem~\ref{mainExplicit} is proved.

\section{Proof of Theorem~\ref{main}}\label{secLP}

Recall that an element of a finite group is called a {\it $p'$-element} if its order is not divisible by the prime $p$.
The following lemma shows that finite simple groups of Lie type contain elements of large order.

\begin{lemma}\label{orderInSimp}
	Any finite simple group of Lie type in characteristic $ p $ contains a $p'$-element of order at least $p^{1/5}$.   
\end{lemma}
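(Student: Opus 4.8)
The plan is to go through the families of finite simple groups of Lie type in characteristic $p$ and exhibit, in each case, a maximal torus (or a suitable cyclic subgroup) containing a $p'$-element whose order is comfortably larger than $p^{1/5}$. Since every semisimple element of a group of Lie type in characteristic $p$ is a $p'$-element, it suffices to find a semisimple element of order at least $p^{1/5}$. The natural source is a Singer-type cyclic torus: in $\mathrm{GL}_n(q)$ there is a cyclic subgroup of order $q^n-1$, and variants of this exist in every classical and exceptional family. Concretely, I would use that a simple group of Lie type over $\mathbb{F}_q$ (with $q$ a power of $p$) of untwisted rank $\ell$ contains a cyclic torus of order at least, roughly, $(q-1)$ or $(q^2-1)/(q-1) = q+1$ or more generally something of order divisible by a primitive prime divisor of $q^e-1$ for some $e \le$ the rank; in all cases this produces a $p'$-element of order $\geq q-1$ when $q \geq 4$, and one checks the small cases $q \in \{2,3\}$ by hand.

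The key steps, in order: (i) Reduce to finding a semisimple element, using that semisimple elements have $p'$-order. (ii) Observe that $q \geq p$, so it is enough to find a $p'$-element of order at least $q^{1/5}$, or even simply of order at least $q-1$ once $q$ is not too small. (iii) For the classical groups, invoke the existence of Singer cycles: $\mathrm{SL}_n(q)$, $\mathrm{SU}_n(q)$, $\mathrm{Sp}_{2n}(q)$, and the orthogonal groups each contain a cyclic maximal torus of order at least $(q^n-1)/(q-1)$, $(q^n - (-1)^n)/(q+1)$, $q^n+1$, $q^n+1$ or $q^{n-1}+1$ respectively (up to small adjustments for the center), and the image of such a torus in the simple group is still large; in every case the order exceeds $q^{1/5}$ with room to spare, even after dividing by $|Z|$ which is at most $n+1$. (iv) For the exceptional groups $G_2, F_4, E_6, {}^2E_6, E_7, E_8, {}^3D_4, {}^2B_2, {}^2G_2, {}^2F_4$, use the known orders of the maximal tori (e.g. from the list of cyclotomic factors of $|G|$): each contains a cyclic torus whose order is divisible by $\Phi_h(q)$ for the Coxeter number $h$, hence of order at least $q-1$, and typically much larger. (v) Finally dispose of the finitely many groups over $\mathbb{F}_2$ and $\mathbb{F}_3$ individually: for these, $p^{1/5} \le 3^{1/5} < 1.25$, so literally any nontrivial $p'$-element works, and such elements plainly exist (e.g. an element of order $3$ in a characteristic-$2$ group, or of order $2$ in a characteristic-$3$ group), which one can read off from the torus orders already computed.

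I would organize the write-up around a single uniform statement: a finite simple group of Lie type over $\mathbb{F}_q$ in characteristic $p$ contains a cyclic $p'$-subgroup whose order is at least $(q-1)/d$ where $d = \gcd$ of the relevant index (bounded by the rank), and then note $(q-1)/d \geq p^{1/5}$ for all $q \geq$ some small bound, checking the remaining cases directly. The main obstacle will be handling the small cases cleanly and making sure the estimate survives the passage from the linear group to the simple quotient (i.e. dividing by the center and, where relevant, by the diagonal or graph automorphisms that have already been quotiented out): one must be slightly careful that the exponent $1/5$, rather than a larger fraction, is chosen precisely so that these small-rank, small-$q$ corrections never bite — for instance $\mathrm{PSL}_2(4) \cong A_5$ still contains elements of order $3$ and $5$, both exceeding $4^{1/5}$, so nothing is lost. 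I expect no deep difficulty, only the bookkeeping of confirming the bound holds with the constant $1/5$ across the sporadic-looking small cases.
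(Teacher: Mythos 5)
Your approach is correct in substance but takes a genuinely different route from the paper, and the route you choose is more laborious. You work with Singer-type (anisotropic or large cyclic) maximal tori of order roughly $q^e\pm1$ divided by small factors, across all classical and exceptional families, and you correctly flag that the main work is tracking what survives the quotient by the center and handling small $q$. The paper instead appeals to the \emph{split} torus: by a single citation (the structure of the Cartan subgroup in the universal version $K$, \cite[Theorem~2.4.7~(d)]{GLS}) it extracts a cyclic $p'$-subgroup of order $p-1$, whose image in $S = K/Z(K)$ has order at least $(p-1)/|Z(K)|$. Since $|Z(K)| \leq 4$ outside types $A_\ell$ and ${}^2A_\ell$, the bound $(p-1)/4 \geq p^{1/5}$ handles $p \geq 7$ uniformly, with a separate reference (Buturlakin) for the linear and unitary types where the center can be large. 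For $p \leq 5$ the paper avoids all case analysis by invoking Burnside's $p^aq^b$ theorem: $|S|$ has at least three prime divisors, so some prime $\geq 3 > 5^{1/5}$ divides $|S|$ and is distinct from $p$. The upshot is that both approaches find a large semisimple element inside a torus, but the paper's choice of the split torus gives a bound directly in $p$ (rather than $q \geq p$) with a single uniform citation and no Singer-cycle case analysis, and the Burnside argument eliminates the entire small-field bookkeeping that your plan defers to a final ``by hand'' pass. Your approach would work if written out, but you would end up re-deriving torus orders family by family and tracking centers and field sizes; the paper's argument is shorter precisely because $p^{1/5}$ is weak enough that the split torus already suffices.
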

\begin{proof}
	Assume first that $p \geq 7$. 	
	Let $ S $ be a finite simple group of Lie type in characteristic $ p $.
	Let $ K $ be the corresponding universal version, and recall that $ S \simeq K/Z(K) $.
	By~\cite[Theorem~2.4.7~(d)]{GLS}, the Cartan subgroup of $ K $ contains a cyclic subgroup of order $ p-1 $,
	so let $ C $ be the image of that subgroup in $ S $. Clearly $ C $ is a cyclic group of
	order not divisible by $ p $, and $ |C| \geq (p-1)/|Z(K)| $.
	
	Unless $ S $ has type $ A_l $ or $ ^2 A_l $, \cite[Table~2.2]{GLS} gives us $ |Z(K)| \leq 4 $,
	which proves the claim in this case, since $ (p-1)/4 \geq p^{1/5} $. If $ S $ has type $ A_l $ or $ ^2 A_l $, then
	one can find an element of order at least $ (p-1)/2 $, see, for instance,~\cite[Corollary~3]{Buturlakin}.
	
	Assume that $p \leq 5$. The order of $S$ has at least three distinct prime divisors by Burnside's theorem.
	It follows that $S$ must have a $p'$-element of order at least $3 > 5^{1/5} \geq p^{1/5}$.  	
\end{proof}

We need the following modular analogue of Jordan's theorem on linear groups.
The original result was proved by Weisfeiler~\cite{W, Wnotes} with the use of the classification of finite simple groups,
but we use a classification-free version due to Larsen and Pink.

\begin{proposition}[{\cite[Theorem~0.2]{LarsenPink}}]
	\label{LP}
	For every $ d $ there exists a constant $ J(d) $ depending only on $ d $
	such that any subgroup $ H $ of $ \mathrm{GL}(d,p) $, $ p $ prime,
	possesses normal subgroups $ P \leq B \leq E $ such that
	\begin{enumerate}
		\item $ |H : E| \leq J(d) $.
		\item Either $ E = B $, or $ E/B $ is a direct product of finite simple groups of Lie type in characteristic $ p $.
		\item $ B/P $ is an abelian $ p' $-group.
		\item $ P $ is a (possibly trivial) $ p $-group.
	\end{enumerate}
\end{proposition}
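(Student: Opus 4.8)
The statement is the Larsen--Pink theorem, a deep result, so what follows is only a high-level plan; it follows their strategy of replacing the classification of finite simple groups by algebraic geometry over $ k = \overline{\mathbb{F}}_p $. Fix $ H \leq \mathrm{GL}(d,p) $, viewed as a finite subgroup of $ \mathbf{G} = \mathrm{GL}_d $ over $ k $, and argue by induction on $ d $. The base case $ d = 1 $ is immediate: $ \mathrm{GL}(1,p) $ is cyclic of order $ p-1 $, so one takes $ P = 1 $, $ B = E = H $. For the inductive step I would set up a dichotomy according to whether $ H $ preserves a ``bounded-complexity'' structure on $ k^d $ --- a nonzero proper $ kH $-submodule, an imprimitivity decomposition, a tensor or tensor-induced decomposition, or a decomposition coming from a field extension. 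If it does, then up to index bounded in terms of $ d $ the group $ H $ embeds into a product or wreath-type arrangement of general linear groups of strictly smaller dimension, and one finishes by the inductive hypothesis together with an elementary gluing of the resulting normal series.

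The heart of the argument is the remaining, ``primitive'' case. Here I would use a quantitative escape-from-subvarieties principle: there is a constant $ C(d) $ such that if a finite subgroup of $ \mathbf{G}(k) $ meets every closed subvariety of $ \mathbf{G} $ of dimension less than $ \dim \mathbf{G} $ and degree at most $ C(d) $ in at most $ C(d) $ points, then it must contain the $ \mathbb{F}_q $-rational points (for a suitable Frobenius and some $ q = p^a $) of a reductive $ k $-subgroup $ \mathbf{G}' \leq \mathbf{G} $, with everything of complexity bounded in terms of $ d $. Contrapositively, if $ H $ does \emph{not} contain such a large finite group of Lie type, then from its elements of $ p $-power order (or from the places where it fails to escape) one extracts a proper closed algebraic subgroup of bounded complexity that $ H $ essentially normalizes, which again lowers the dimension and lets the induction run. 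Granting this, let $ E \trianglelefteq H $ be the preimage of the subgroup generated by the finite groups of Lie type produced this way; the bound on $ |H:E| $ comes from the facts that there are boundedly many $ \mathbf{G}(k) $-conjugacy classes of reductive subgroups of bounded complexity and boundedly many choices for a normalizer modulo a centralizer. Then $ E/B $ --- with $ B $ the preimage of the solvable part --- is a direct product of finite simple groups of Lie type in characteristic $ p $, because for $ \mathbf{G}' $ simple the group $ \mathbf{G}'(\mathbb{F}_q) $, modulo its center and a subgroup of bounded index, is simple; this is provable directly from the structure theory of reductive groups (root datum, Bruhat decomposition, Chevalley commutator relations) without invoking the classification.

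It remains to dissect the solvable part. Let $ P \trianglelefteq H $ be a maximal normal $ p $-subgroup of $ B $, corresponding to the unipotent radical of an appropriate ``algebraic hull'' of $ B $. The quotient $ B/P $ then acts completely reducibly on the associated graded module and so embeds into a product of maximal tori of the relevant reductive groups; a Clifford-theoretic argument shows $ B/P $ has an abelian $ p' $-subgroup of index bounded in terms of $ d $, and after enlarging $ P $ and $ E $ accordingly one arranges $ B/P $ to be abelian of order prime to $ p $ and $ P $ a genuine $ p $-group. Conditions (1)--(4) then hold by construction, with $ J(d) $ the accumulated product of the dimension-, degree-, and class-number bounds.

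I expect the \textbf{main obstacle} to be the primitive case: making the escape-from-subvarieties estimate uniform, i.e.\ bounding \emph{simultaneously} the dimension, degree, and number of irreducible components of all the auxiliary varieties that enter (conjugacy classes, centralizers, normalizers, fixed-point loci), and then deducing that a finite subgroup ``spread out'' relative to all of them genuinely contains the $ \mathbb{F}_q $-points of a reductive subgroup. The classification-free abstract recognition of finite simple groups of Lie type from the internal geometry of $ \mathbf{G}'(\mathbb{F}_q) $ is the other delicate point, and together these carry essentially all of the technical weight of Larsen--Pink.
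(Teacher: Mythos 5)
This proposition is not proved in the paper at all: it is quoted verbatim from Larsen and Pink, \cite[Theorem~0.2]{LarsenPink}, with $E$, $B$, $P$ playing the roles of $\Gamma_1 \supseteq \Gamma_2 \supseteq \Gamma_3$ in their notation. There is therefore no internal proof to compare against; the paper simply imports the result as a black box and goes on to use it in Corollary~\ref{lieOrAbelian}. Your sketch is a fair high-level recollection of the Larsen--Pink strategy --- dimension induction, the ``escape from subvarieties'' counting inequality, and a CFSG-free recognition of the finite groups of Lie type sitting inside $\mathbf{G}'(\mathbb{F}_q)$ --- and you are right that making these uniform in $d$ and $p$ is where essentially all of the technical weight lies. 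But as you yourself concede, what you have written is a plan rather than a proof: the quantitative escape principle, the Clifford-theoretic dissection of the solvable radical, and the recognition step are each substantial theorems in their own right and are not established here. For the purposes of this paper the correct and intended move is exactly what the authors do, namely to cite \cite{LarsenPink} (the classification-free replacement for Weisfeiler's theorem \cite{W,Wnotes}); reproving it would be far outside the scope of the argument and of the blind exercise.
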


As a consequence of the above two results, we show that irreducible linear groups contain large abelian subgroups.

\begin{corollary}\label{lieOrAbelian}
	Let $ H $ be a subgroup of $ \mathrm{GL}(d,p) $ acting irreducibly on the vector space of dimension $d$ over $\mathbb{F}_{p}$,
	where $ p $ is prime.
	\begin{enumerate}
		\item If $H$ has a composition factor isomorphic to a finite simple group of Lie type in characteristic $p$,
			then $H$ contains a $p'$-element of order at least $p^{1/5}$.
		\item If $H$ does not have a composition factor isomorphic to a finite simple group of Lie type in characteristic $p$,
			and $ |H| \geq J(d)^2 $, where $ J(d) $ is as in Proposition~\ref{LP}, 
			then $H$ contains an abelian normal $p'$-subgroup of order at least~$|H|^{1/2}$.
	\end{enumerate}
\end{corollary}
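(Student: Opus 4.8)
The proof naturally splits according to the two cases, using Lemma~\ref{orderInSimp} for part~(1) and Proposition~\ref{LP} for part~(2).

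For part~(1), suppose $S$ is a composition factor of $H$ isomorphic to a finite simple group of Lie type in characteristic~$p$; then $S \cong M/N$ for some subgroups $N \triangleleft M \leq H$. By Lemma~\ref{orderInSimp}, $S$ contains a $p'$-element $x$ with $|x| \geq p^{1/5}$. The idea is to lift $x$ to $H$ by passing to a $p'$-part: choose a preimage $g \in M$ of $x$ and let $g_{p'} \in \langle g \rangle \leq H$ be its $p'$-component. Since $|x|$ is coprime to $p$, the image of $g_{p'}$ in $M/N$ equals $x$, so $|x|$ divides $|g_{p'}|$; hence $g_{p'}$ is a $p'$-element of $H$ of order at least $p^{1/5}$, as desired. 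This step is routine once one notes that passing to $p'$-parts commutes with the quotient map on the cyclic group $\langle g \rangle$.

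For part~(2), I would apply Proposition~\ref{LP} to obtain normal subgroups $P \leq B \leq E$ of $H$ with $|H:E| \leq J(d)$, with $E/B$ either trivial or a direct product of finite simple groups of Lie type in characteristic~$p$, with $B/P$ an abelian $p'$-group, and with $P$ a $p$-group. First, every composition factor of $E/B$ is a composition factor of $H$, so the hypothesis that $H$ has no composition factor of Lie type in characteristic~$p$ forces $E = B$. Next, using that $V$ is a faithful irreducible $\mathbb{F}_p H$-module and that $P \triangleleft H$ is a $p$-group, the fixed space $V^P$ is nonzero (a $p$-group acting on a nonzero $\mathbb{F}_p$-vector space has a nonzero fixed vector) and is $H$-invariant by normality of $P$, hence $V^P = V$ by irreducibility; faithfulness then gives $P = 1$. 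Therefore $B = B/P$ is an abelian normal $p'$-subgroup of $H$, and $|H| = |H : B| \cdot |B| \leq J(d) \cdot |B|$. Since $|H| \geq J(d)^2$, we conclude $|B| \geq |H| / J(d) \geq |H|^{1/2}$, as required.

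The main point to get right is the reduction $P = 1$ in part~(2): this is the only place where the irreducibility and faithfulness of the action of $H$ on $V$ are genuinely used, and it is what allows the Larsen--Pink decomposition to collapse to a single large abelian normal $p'$-subgroup. Everything else is bookkeeping with composition factors and an order count.
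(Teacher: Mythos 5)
Your proof is correct and follows essentially the same route as the paper: part~(1) lifts the large $p'$-element from a Lie-type composition factor, and part~(2) applies Proposition~\ref{LP}, kills $P$ using irreducibility, identifies $E=B$ from the composition-factor hypothesis, and compares $|B|$ with $|H:B|\leq J(d)$. The only cosmetic difference is that you derive $P=1$ via the elementary fixed-point argument for $p$-groups acting on $\mathbb{F}_p$-spaces, whereas the paper invokes Clifford's theorem; both are standard and equivalent here.
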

\begin{proof}
	Assume that $H$ has a composition factor $S$ isomorphic to a finite simple group of Lie type in characteristic $p$.
	The group $S$ contains a $p'$-element $x$ of order at least $p^{1/5}$. The preimage of $x$ in $H$ has order divisible
	by a number coprime to $p$ and at least $p^{1/5}$. This proves (1).
	
	Assume that $H$ does not have a composition factor isomorphic to a finite simple group of Lie type in characteristic $p$.
	We use Proposition~\ref{LP} and its notation. Since $H$ acts irreducibly on the underlying module
	of dimension $d$ over $\mathbb{F}_{p}$, the normal $p$-subgroup $ P $ of $H$ is trivial by Clifford's theorem.
	We have $E = B$ by assumption. Since $B$ is an abelian normal $p'$-subgroup,
	we are finished if $|B| \geq |H|^{1/2}$. If $|B| < |H|^{1/2}$, then $|H|^{1/2} < |H : B| \leq J(d)$,
	contradicting our assumption $ |H| \geq J(d)^2 $. This proves (2).
\end{proof}		
	
We return to the proof of Theorem~\ref{main}.
Suppose that $ H $ contains a composition factor isomorphic to a finite simple group of Lie type in characteristic $ p $.
Then Corollary~\ref{lieOrAbelian}~(1) and Theorem~\ref{mainExplicit} give us
$$ \diam(V, H) < 322 \cdot d \cdot 144^{d(d+1)} \cdot |V|^{\frac{d(d+1)\log 4}{\log |A|}}, $$
where $ A $ is an abelian $ p' $-subgroup of $ H $ with $ |A| \geq p^{1/5} $. Thus
$$ \diam(V, H) < 322 \cdot d \cdot 144^{d(d+1)} \cdot 4^{5d^2(d+1)} \leq 2^{22d^3}, $$
where the last inequality holds for all $ d \geq 2 $.

If $ H $ does not contain a composition factor isomorphic to a finite simple group of Lie type in characteristic $ p $,
and $ |H| \geq J(d)^2 $, then Corollary~\ref{lieOrAbelian}~(2) and Theorem~\ref{mainExplicit} imply
$$ \diamd(V, H) < d \cdot 2576^{d(d+1)} \cdot |V|^{\frac{(d+1) \log 4}{\log |A| }}, $$
where $ A $ is a normal abelian $ p' $-subgroup of $ H $ with $ |A| \geq |H|^{1/2} $. Hence
$$ \diamd(V, H) < d \cdot 2576^{d(d+1)} \cdot |V|^{\frac{2(d+1) \log 4}{\log |H| }} \leq 2^{18d^2} \cdot |V|^{\frac{d \log 64}{\log |H| }}, $$
where the last inequality holds for all $ d \geq 2 $.
If $ d = 1 $, then $ V = \mathbb{F}_p $ and $ H $ is a multiplicative subgroup of $ \mathbb{F}_p^\times $.
By Proposition~\ref{CC}, we have
$$ \diamd(V, H) \leq 633 \cdot |V|^{\frac{\log 4}{\log |H|}} < 2^{18} \cdot |V|^{\frac{\log 64}{\log |H|}}, $$
so the claimed inequality holds in this case as well.

\section{Proof of Corollary~\ref{ratioBounds}}\label{secCor}

The lower bound is the first inequality from Proposition~\ref{lowerBounds}.

Let $ J(d) $ be the function from Theorem~\ref{main}.
Suppose that $ |H| < J(d)^2 $. Then $ \log |V| / \log |H| \geq \frac{d \log p}{\log J(d)^2}$ and 
taking $ f(d) \geq J(d)^4 $ we have
$$ f(d)^{\frac{\log |V|}{\log |H|}} \geq (J(d)^2)^{\frac{2d \log p}{\log J(d)^2}} = p^{2d} \geq p \cdot d. $$
By Proposition~\ref{centerIneq}, the orbital diameter is always at most $ p \cdot d $, so we are done.

Now assume that $ |H| \geq J(d)^2 $. Recall that $ |H| \leq |\GL(V)| \leq p^{d^2} $,
hence $ \log |V| / \log |H| \geq 1/d $. Now taking
$$ f(d) \geq (\max \{ 2^{22d^3},  2^{18d^2} \cdot 64^d \})^d $$
and applying Theorem~\ref{main} proves the claim.

\section{Affine groups with large orbital diameters}\label{secExamples}

In this section we provide several series of groups with orbital graphs of large diameter.

Let $K$ be a nontrivial subgroup of $\mathbb{F}_{p}^{\times}$, where $ p $ is an odd prime.
Let $S$ be a transitive permutation group of degree $d$ and let $H = K \wr S$ be a wreath product
acting (linearly) imprimitively on $V = \mathbb{F}_p^d$. By~\cite[Chapter IV~\S 15, Lemma 4]{Sup},
since $p$ is odd and $K$ is nontrivial, the group $H$ acts irreducibly on $V$.

If $\{ v_1, \ldots, v_d \}$ is the standard basis of $V$, then the orbit of $v_1$ under $H$ is $v_1^H = {v}^{K}_{1} \cup \ldots \cup {v}^{K}_{d}$.
Therefore
$$ m \cdot (v_1^H \cup \{ 0 \}) \subseteq (m \cdot (v_1^K \cup \{ 0 \})) + \dots + (m \cdot (v_d^K \cup \{ 0 \})), $$
for any $ m \geq 1 $ and hence $ \diamd(V,H) \geq \diamd(\mathbb{F}_p, K) \cdot d $ by Equation~(\ref{eqDiamd}).
As $ \mathbb{F}_p^\times \cap H = K $, Proposition~\ref{centerIneq} implies that $ \diamd(V,H) \leq \diamd(\mathbb{F}_p, K) \cdot d $.
Therefore we proved

\begin{proposition}\label{wrExample}
	Let $ p $ be an odd prime, and let $ K $ be a nontrivial subgroup of $ \mathbb{F}_p^\times $.
	If $ S $ is a transitive permutation group of degree $ d $, then the (linearly) imprimitive wreath
	product $ H = K \wr S $ acts irreducibly on $ V $ and
	$ \diamd(V, H) = \diamd(\mathbb{F}_p, K) \cdot d $.
\end{proposition}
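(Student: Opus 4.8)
The plan is to prove the two assertions separately: that $H=K\wr S$ acts irreducibly on $V=\mathbb{F}_p^d$, and that $\diamd(V,H)=\diamd(\mathbb{F}_p,K)\cdot d$. For the equality I would establish both inequalities, the upper bound coming essentially for free from Proposition~\ref{centerIneq}, and the lower bound from the special shape of the relevant $H$-orbit.

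For irreducibility I would argue directly. Since $K$ is nontrivial, pick $\lambda\in K$ with $\lambda\neq 1$; for each $i$ the diagonal matrix with entry $\lambda$ in position $i$ and $1$ elsewhere lies in the base group $K^d\leq H$, and applying it to an arbitrary vector $w=\sum_j c_jv_j$ of an $H$-invariant subspace $W$ shows $(\lambda-1)c_iv_i\in W$, hence $c_iv_i\in W$. Thus any $H$-invariant subspace $W$ is a sum of coordinate lines $\mathbb{F}_pv_i$, and transitivity of $S$ on those lines forces $W\in\{0,V\}$. (Alternatively, this is exactly the content of the lemma from~\cite{Sup} cited above.)

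For the lower bound $\diamd(V,H)\geq\diamd(\mathbb{F}_p,K)\cdot d$, I would take the orbit $\Delta=v_1^H$ and observe that $\Delta=v_1^K\cup\dots\cup v_d^K$, so every element of $\Delta$ is supported on a single coordinate with nonzero entry lying in $K$. Put $n_0=\diamd(\mathbb{F}_p,K)$ and choose $a\in\mathbb{F}_p$ that is not a sum of fewer than $n_0$ elements of $K$ (such $a$ exists by the definition of $n_0$). If $a(v_1+\dots+v_d)$ were a sum of $m$ elements of $\Delta\cup\{0\}$, then after discarding the zero summands and grouping the rest according to the coordinate on which they are supported, the $i$-th coordinate $a$ would be a sum of the members of the corresponding group, all of which lie in $K$; by the choice of $a$ that group has at least $n_0$ members, and summing over $i$ gives $m\geq n_0d$. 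By Equation~(\ref{eqDiamd}) this yields $\diamd(V,H)\geq n_0d$.

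For the upper bound, Proposition~\ref{centerIneq} gives $\diamd(V,H)\leq\diamd(\mathbb{F}_p,\mathbb{F}_p^\times\cap H)\cdot d$, and since the scalar matrices inside $K\wr S$ are precisely the $\lambda I$ with $\lambda\in K$, one has $\mathbb{F}_p^\times\cap H=K$; combining the two bounds gives the asserted equality. I do not expect any serious obstacle here; the one point that needs care is that the factor $d$ in the lower bound is genuine — one cannot economise by cleverly distributing the summands over the coordinates — which is precisely what the single-coordinate support of the elements of $\Delta$ rules out.
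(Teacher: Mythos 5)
Your proof is correct and follows essentially the same route as the paper: the lower bound comes from the orbit $v_1^H=v_1^K\cup\dots\cup v_d^K$ having single-coordinate support, forcing at least $\diamd(\mathbb{F}_p,K)$ summands per coordinate, and the upper bound comes from Proposition~\ref{centerIneq} together with $\mathbb{F}_p^\times\cap H=K$. The only cosmetic difference is that you give a self-contained elementary argument for the irreducibility of $V$ under $K\wr S$, whereas the paper simply cites~\cite{Sup}.
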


Clearly, $ \diam(V, H) = \diam(\mathbb{F}_p, K \langle -1 \rangle) \cdot d $. By taking appropriate $ K $ it
easily follows that the inequalities on undirected diameter presented in Proposition~\ref{centerIneq} are sharp.

Let $r \geq 5$ be an integer. In the second example $H$ is the alternating
group $\mathrm{Alt}(r)$ and $p$ is an odd prime not dividing $r$. Let $\{ v_{1}, \ldots , v_{r} \}$ be the basis of
the natural permutation module of $H$ over the field $\mathbb{F}_{p}$.
This module has a proper submodule $V$ which consists of vectors $v = \sum_{i=1}^{r} \lambda_{i} v_{i}$,
where each $\lambda_{i}$ is an element of $\mathbb{F}_{p}$, such that $\sum_{i=1}^{r} \lambda_{i} = 0$.
The module $ V $ is irreducible by~\cite[Lemma~5.3.4]{KL} and has dimension $ d = r-1 $ over $\mathbb{F}_{p}$.
We will prove that $ \diam(V, H) \geq (p-1)d/4 $.

Consider the $H$-orbit $\Delta = \{ \pm (v_{i} - v_{j}) \mid 1 \leq i < j \leq r \} \subseteq V$.
Let $m$ be the smallest positive integer such that $m \cdot (\Delta \cup \{ 0 \}) = V$.
Since this is a lower bound for $\mathrm{diam}(V,H)$ by Equation~(\ref{eqDiam}),
it is sufficient to show that $m \geq ((p-1)/2) \cdot d/2$.

Observe that $m = \max_{v \in V} \{ m(v) \}$ where $m(v)$ is the smallest positive integer such that $v \in m(v) \cdot (\Delta \cup \{ 0 \})$.
Denote the elements of $\mathbb{F}_{p}$ by $0, \pm 1, \ldots , \pm (p-1)/2$.
Let $v = \sum_{k=1}^{r} \lambda_{k} v_{k}$ where each $\lambda_{k}$ is in $\mathbb{F}_{p}$.
We claim that $2 m(v) \geq \sum_{k=1}^{r} |\lambda_{k}|$. This would finish the proof of the lower bound,
as one can take $ \lambda_k = (p-1)/2 $ for $ k = 1, \dots, r-1 $.

We prove the claim by induction on $t = m(v)$. This is clear for $t \leq 1$. Assume that $t \geq 2$ and that the claim is true for $t-1$. Let $v = \sum_{k=1}^{m(v)} \delta_{k}$ where each $\delta_{k} \in \Delta$. Let $\delta_{m(v)} = v_{i} - v_{j}$ for some distinct $i$ and $j$ from $\{ 1, \ldots , r \}$. Let $v' = \sum_{k=1}^{m(v)-1} \delta_{k}$. Then $2 m(v) -2 \geq 2 m(v') \geq (\sum_{k=1}^{r} |\lambda_{k}|) - |\lambda_{i}| - |\lambda_{j}| + |\lambda_{i}-1| + |\lambda_{j}+1|$. Since $1 - |\lambda_{i}| + |\lambda_{i} - 1| \geq 0$ and $1- |\lambda_{j}| + |\lambda_{j}+1| \geq 0$, the result follows.

\begin{proposition}\label{altExample}
	For every $ d \geq 4 $ and every odd prime $ p $ not dividing $ d+1 $ there
	exists a nonabelian simple group $ H $ with $ \mathrm{diam}(V,H) \geq (p-1)d/4 $.
\end{proposition}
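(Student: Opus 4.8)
The plan is to realize Proposition~\ref{altExample} as essentially a restatement of the construction carried out in the paragraphs immediately preceding it, once we match up parameters. Given $d \geq 4$ and an odd prime $p$ not dividing $d+1$, set $r = d+1 \geq 5$, so that $p$ does not divide $r$, and take $H = \mathrm{Alt}(r)$ acting on the fully deleted permutation module $V$ over $\mathbb{F}_p$. By~\cite[Lemma~5.3.4]{KL} this module is irreducible of dimension $r-1 = d$, so $HV$ is an affine primitive permutation group and $H$ is a nonabelian simple group (here we use $r \geq 5$). It therefore only remains to verify the diameter lower bound $\mathrm{diam}(V,H) \geq (p-1)d/4$.

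For that bound I would invoke the computation already displayed before the proposition. Consider the single $H$-orbit $\Delta = \{\pm(v_i - v_j) \mid 1 \leq i < j \leq r\} \subseteq V$, and let $m$ be the least positive integer with $m \cdot (\Delta \cup \{0\}) = V$. By Equation~(\ref{eqDiam}), $m \leq \mathrm{diam}(V,H)$, so it suffices to show $m \geq \tfrac{p-1}{2} \cdot \tfrac{d}{2} = (p-1)d/4$. Writing $m = \max_{v \in V} m(v)$ with $m(v)$ the least length of a representation of $v$ as a sum of elements of $\Delta$, the key claim is the weight inequality $2\,m(v) \geq \sum_{k=1}^{r} |\lambda_k|$, where $v = \sum_k \lambda_k v_k$ and each coefficient is represented by its least-absolute-value residue in $\{0, \pm 1, \dots, \pm(p-1)/2\}$. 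This is proved by induction on $t = m(v)$: the base cases $t \leq 1$ are immediate, and for the step one removes a single summand $\delta_{m(v)} = v_i - v_j$, applies the inductive hypothesis to $v' = v - \delta_{m(v)}$, and uses the two elementary triangle-type inequalities $1 - |\lambda_i| + |\lambda_i - 1| \geq 0$ and $1 - |\lambda_j| + |\lambda_j + 1| \geq 0$ to absorb the change. Evaluating the claim at the vector with $\lambda_1 = \dots = \lambda_{r-1} = (p-1)/2$ and $\lambda_r = -(r-1)(p-1)/2$ (which lies in $V$ since its coordinates sum to zero) gives $2\,m(v) \geq (r-1)(p-1)/2 = d(p-1)/2$, hence $m \geq d(p-1)/4$, as required.

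The only genuinely delicate point is the well-definedness of the weight $\sum_k |\lambda_k|$ and the consistency of the induction with reduction modulo $p$: the quantity $|\lambda_k|$ is the absolute value of the balanced residue, and one must check that $|\lambda_i - 1|$ in the inductive step is also computed with the balanced representative of $\lambda_i - 1 \pmod p$, which is exactly where the inequalities $1 - |\lambda_i| + |\lambda_i - 1| \geq 0$ (valid for the balanced representative, including the wrap-around case $\lambda_i = -(p-1)/2$) are needed. Since all of this is carried out in the text preceding the proposition, the proof reduces to citing that argument. The main obstacle, such as it is, is purely bookkeeping: confirming that $r = d+1$ makes the dimension come out to $d$, that the hypothesis ``$p \nmid d+1$'' is precisely ``$p \nmid r$'', and that $H = \mathrm{Alt}(r)$ with $r \geq 5$ is simple and nonabelian.

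\begin{proof}
	Set $ r = d+1 \geq 5 $. Since $ p $ does not divide $ d+1 = r $, the construction preceding the statement applies: with $ H = \mathrm{Alt}(r) $ acting on the deleted permutation module $ V $ over $ \mathbb{F}_p $, the module $ V $ is irreducible of dimension $ r-1 = d $ by~\cite[Lemma~5.3.4]{KL}, so $ HV $ is an affine primitive permutation group, and $ H $ is a nonabelian simple group because $ r \geq 5 $. The inequality $ \mathrm{diam}(V, H) \geq (p-1)d/4 $ was established in the discussion above, so the proposition follows.
\end{proof}
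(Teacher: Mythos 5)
Your proof is correct and matches the paper's argument exactly: both take $r = d+1$, $H = \mathrm{Alt}(r)$ acting on the deleted permutation module $V$ over $\mathbb{F}_p$ (irreducible of dimension $d$ by~\cite[Lemma~5.3.4]{KL}), and derive the lower bound from the weight inequality $2m(v) \geq \sum_k |\lambda_k|$ proved by induction on $m(v)$, evaluated at $\lambda_1 = \dots = \lambda_{r-1} = (p-1)/2$. The only substantive addition is that you explicitly flag the wrap-around case $\lambda_i = -(p-1)/2$ for the inequality $1 - |\lambda_i| + |\lambda_i - 1| \geq 0$, which the paper leaves implicit but is indeed the one place where the balanced-residue arithmetic differs from ordinary integers.
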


In Corollary~\ref{ratioBounds} it was shown that there exists a function $ f(d) $ depending on $ d $ only
such that $ \diam(V, H) \leq f(d)^{\frac{\log |V|}{\log |H|}} $ for any irreducible subgroup $ H \leq \GL(V) $,
where $ V $ has dimension $ d $ over $ \mathbb{F}_p $. Proposition~\ref{wrExample} (and also Proposition~\ref{altExample})
implies that $ f(d) $ must depend on $ d $ at least linearly.
For example, let $ S $ be the symmetric group $ \mathrm{Sym}(d) $ and $ K = \langle -1 \rangle $.
Then $ |H| = |K \wr S| = 2^d \cdot d! $ and $ \log |H| \geq d \log (d/e) $. We have $ \diam(V, H) = d(p-1)/2 $ hence
$$ d(p-1)/2 \leq f(d)^{\frac{\log |V|}{\log |H|}} \leq p^{\frac{\log f(d)}{\log (d/e)}}, $$
implying $ \log f(d) / \log (d/e) \geq 1 $ for $ d \geq 3 $. Thus $ d/e \leq f(d) $, as claimed.

In our third example, let $q$ and $d$ be integers with $q \geq 2$ and $d \geq 3$.
Zsigmondy's theorem~\cite{Zig} states that, provided $(q,d) \not= (2,6)$, there is a
prime $r$ dividing $q^{d}-1$ but not dividing $q^{i}-1$ for all $i < d$.
Such a prime $r$ is called a {\it Zsigmondy prime} and is denoted by
$q_{d}$. There may be several Zsigmondy primes $q_{d}$ for given $d$ and $q$.
It is easy to see that $q_{d}$ must be congruent to $1$ modulo $d$. If $d+1$ is
prime then $d+1$ is a Zsigmondy prime $q_{d}$ whenever the multiplicative order
of $q$ modulo $d+1$ is $d$. For a given prime $d+1$ there are infinitely many
primes $q$ with this property by Dirichlet's prime number theorem. 

Let $p$ and $d$ be such that $d+1$ is an odd Zsigmondy prime $p_{d}$.  Set $ V = \mathbb{F}_p^d $
and let $h$ be an element of $\mathrm{GL}(V)$ of order $d+1$.
Since $\langle h \rangle$ is a cyclic group of order coprime to $p$, the
$\mathbb{F}_p\langle h \rangle$-module $V$ is completely reducible by Maschke's
theorem. Moreover, since the order of $\langle h \rangle$ is a Zsigmondy prime, $V$ must be irreducible. 

Let $\Delta$ be an orbit of $\langle h \rangle$. It must have size $d+1$ and it must contain a basis of~$V$;
set $ \Delta = \{ v_{1}, \ldots , v_{d}, v \}$, where $v_1, \dots, v_d$ are linearly independent.
Since $\Delta$ is an orbit, $h$ preserves the sum over all vectors in $\Delta$:
$$(v_1 + \dots + v_d + v)^{h} = v_1 + \dots + v_d + v,$$
and as $\langle h \rangle$ acts irreducibly on $V$, this sum must be zero. Therefore $v = - \sum_{j=1}^{d} v_{j}$.

Let $H$ be the cyclic group $\langle h \rangle \langle -1 \rangle$.
By the previous paragraph every orbit $\Delta$ of $H$ has the form
$\{ \pm v_{1} \} \cup \ldots \cup \{ \pm v_{d} \} \cup \{ \pm (\sum_{i=1}^{d} v_{i}) \}$ where $\{ v_{1}, \ldots , v_{d} \}$ is a basis for $V$.
Let $\ell$ be the smallest positive integer such that $\ell \cdot (\Delta \cup \{ 0 \}) = V$.
Since $ \Delta = -\Delta $, this is equal to $\mathrm{diam}(V,H)$ by Equation~(\ref{eqDiam}).
We proceed to show that $(p-1)d/4 \leq \ell \leq (p-1)(d+1)/4$, provided that $ p $ is odd.

Assume that $p$ is odd. Let $x$, $0 = x_{0}$, $x_{1}, \ldots , x_{d}$ be elements of $\mathbb{F}_p$.
For two elements $f_{1}$ and $f_{2}$ of $\mathbb{F}_p$ let $D(f_{1},f_{2})$ be their distance defined to be the smallest integer $m$
for which $f_{1} \in f_{2} + m \{ \pm 1, 0 \}$. The smallest integer $m$ for which the vector $\sum_{i=1}^{d} x_{i} v_{i}$ is contained in
$m \cdot (\Delta \cup \{ 0 \})$ is $\min_{x \in \mathbb{F}_p} \{ \sum_{i=0}^{d} D(x_{i} , x) \}$.
We have 
\begin{multline*}
	\min_{x \in \mathbb{F}_p} \{ \sum_{i=0}^{d} D(x_{i},x) \} \geq \min_{f_{d/2} \in \mathbb{F}_p} \{ D(x_{d/2}, f_{d/2}) \} +
	\sum_{i=0}^{\frac{d}{2}-1} \min_{f_{i} \in \mathbb{F}_p} \{ D(x_{i},f_{i}) + D(x_{d-i},f_{i}) \} = \\
	= \sum_{i=0}^{\frac{d}{2}-1} \min_{f_{i} \in \mathbb{F}_p} \{ D(x_{i},f_{i}) + D(x_{d-i},f_{i}) \} =
	\sum_{i=0}^{\frac{d}{2}-1} D(x_{d-i}, x_{i}).  
\end{multline*}
It follows that
$$\ell = \max_{\substack{x_{1}, \ldots , x_{d} \in \mathbb{F}_p\\ x_0 = 0}} \{ \min_{x \in \mathbb{F}_p} \{ \sum_{i=0}^{d} D(x_{i}, x) \} \} \geq
 \max_{\substack{x_{1}, \ldots , x_{d} \in \mathbb{F}_p\\ x_0 = 0}} \sum_{i=0}^{(d/2)-1} D(x_{d-i}, x_{i}) \geq \frac{(p-1)d}{4}.$$ 
On the other hand,
$$\min_{x \in \mathbb{F}_p} \{ \sum_{i=0}^{d} D(x_{i} , x) \} \leq \min \{ \sum_{i=0}^{d} D(x_{i} , 0)  , \sum_{i=0}^{d} D(x_{i} , (p-1)/2) \}.$$ 
Without loss of generality we can renumber $ x_0, \dots, x_d $ in such a way that for some $ z $ and $ k $
we have $x_{0} = \ldots = x_{z} = 0$, $1 \leq x_{z+1} \leq \ldots \leq x_{k} \leq (p-1)/2$ and
$-1 \geq x_{k+1} \geq \ldots \geq x_{d} \geq -(p-1)/2$. With this assumption,
$$\sum_{i=0}^{d} D(x_{i} , 0) = ( \sum_{i=z+1}^{k} x_{i} ) - ( \sum_{i=k+1}^{d} x_{i} ),$$
$$\sum_{i=0}^{d} D(x_{i} , (p-1)/2) = \frac{(p-1)(d+1)}{2}  - ( \sum_{i=z+1}^{k} x_{i} ) +( \sum_{i=k+1}^{d} x_{i} ).$$
It follows that
$$\ell = \max_{\substack{x_{1}, \ldots , x_{d} \in \mathbb{F}_p\\ x_0 = 0}} \{ \min_{x \in \mathbb{F}_p} \{ \sum_{i=0}^{d} D(x_{i}, x) \} \} \leq \frac{(p-1)(d+1)}{4}.$$ 
We summarize the discussed example in the following

\begin{proposition}
	There are infinitely many positive integers $ d $ and, for each such $ d $, infinitely many primes $ p $ and
	cyclic groups $ H $ for which $ (p-1)d/4 \leq \mathrm{diam}(V, H) \leq (p-1)(d+1)/4 $.
\end{proposition}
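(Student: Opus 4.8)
The plan is to realise the required families explicitly as cyclic linear groups and then to compute $\diam(V,H)$ by reducing it to an elementary optimisation problem on $\mathbb{F}_p$. First I would restrict to those (necessarily even) integers $d$ for which $d+1$ is an odd prime; there are infinitely many of these since there are infinitely many primes. Fixing such a $d$, I would use Dirichlet's theorem to produce infinitely many primes $p$ (discarding $p=2$) lying in a residue class of primitive roots modulo $d+1$, so that $p$ has multiplicative order $d$ modulo $d+1$ and hence $d+1$ is a Zsigmondy prime $p_d$ for the pair $(p,d)$. Since then $d+1\mid p^d-1=|\GL(1,p^d)|$, the group $\GL(d,p)$ contains an element $h$ of order $d+1$, and I would set $V=\mathbb{F}_p^d$ and $H=\langle h\rangle\langle -1\rangle\leq\GL(V)$.

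Next I would check that $HV$ is an affine primitive permutation group, i.e.\ that $V$ is a faithful irreducible $\mathbb{F}_pH$-module. Faithfulness is automatic; $V$ is a completely reducible $\mathbb{F}_p\langle h\rangle$-module by Maschke's theorem since $|H|$ is prime to $p$; and if $V$ were not irreducible it would decompose into $\langle h\rangle$-submodules of dimensions $d_i<d$, forcing $d+1=|h|$ to divide $\mathrm{lcm}_i(p^{d_i}-1)$ and so, being prime, to divide some $p^{d_i}-1$ with $d_i<d$, contradicting the Zsigmondy property. I would then describe the orbits: every nonzero $\langle h\rangle$-orbit has size $d+1$ (it is not a fixed vector, by irreducibility, and $d+1$ is prime), it spans $V$ and hence contains a basis $v_1,\dots,v_d$, and since $h$ fixes the sum of the vectors in the orbit while having no nonzero fixed vector, this sum is $0$; thus each such orbit equals $\{v_1,\dots,v_d,\,-\sum_{j=1}^{d}v_j\}$. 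Adjoining $-1$, every nonzero $H$-orbit $\Delta$ then has the shape $\{\pm v_1,\dots,\pm v_d,\,\pm\sum_{j=1}^{d}v_j\}$ for some basis $\{v_i\}$ of $V$; since $\Delta=-\Delta$, Equation~(\ref{eqDiam}) identifies $\diam(V,H)$ with the least integer $\ell$ for which $\ell\cdot(\Delta\cup\{0\})=V$, and this $\ell$ is independent of the chosen orbit.

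It remains to prove $(p-1)d/4\leq\ell\leq(p-1)(d+1)/4$. Writing $v_0=-\sum_{j=1}^{d}v_j$, so that $\sum_{i=0}^{d}v_i=0$, and letting $D(f_1,f_2)$ denote the cyclic distance on $\mathbb{F}_p$ (the least $m$ with $f_1\in f_2+m\{0,\pm1\}$), one checks that $\sum_{i=1}^{d}x_iv_i$ lies in $m\cdot(\Delta\cup\{0\})$ exactly when $m\geq\min_{x\in\mathbb{F}_p}\sum_{i=0}^{d}D(x_i,x)$, where $x_0:=0$; hence $\ell=\max_{x_1,\dots,x_d}\min_{x}\sum_{i=0}^{d}D(x_i,x)$. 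For the lower bound I would pair index $i$ with $d-i$, use the triangle inequality in the form $\min_f\bigl(D(a,f)+D(b,f)\bigr)=D(a,b)$ to get $\min_x\sum_{i=0}^{d}D(x_i,x)\geq\sum_{i=0}^{d/2-1}D(x_{d-i},x_i)$, and then take $x_1=\dots=x_{d/2}=0$ and $x_{d/2+1}=\dots=x_d=(p-1)/2$ so that each of the $d/2$ pairs contributes $D(0,(p-1)/2)=(p-1)/2$; this uses the parity of $d$. For the upper bound I would bound $\min_x$ above by the smaller of the two values at the shifts $x=0$ and $x=(p-1)/2$, and, after renumbering the $x_i$ according to sign and magnitude, estimate that these two values together contribute essentially $(p-1)(d+1)/2$, so that one of them is at most $(p-1)(d+1)/4$.

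The one genuinely delicate point will be this final optimisation: pinning down the constant $1/4$ relies on the index-pairing identity together with the parity of $d$ for the lower bound, and on the reduction to the two extreme shifts $0$ and $(p-1)/2$ together with careful bookkeeping on the sign pattern of the $x_i$ for the upper bound. Everything preceding it — the choice of $d$ and $p$, the construction of $H$, and the description of its orbits — is routine.
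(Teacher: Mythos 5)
Your proposal follows the paper's own argument essentially step for step: choose $d$ even with $d+1$ an odd prime, use Dirichlet to produce infinitely many primes $p$ making $d+1$ a Zsigmondy prime $p_d$, build $H=\langle h\rangle\langle -1\rangle$ with $h$ of order $d+1$, describe the $H$-orbits as $\{\pm v_1,\dots,\pm v_d,\pm\sum v_i\}$, and reduce $\diam(V,H)$ to $\max_{x_1,\dots,x_d}\min_x\sum_{i=0}^d D(x_i,x)$ with the cyclic distance $D$, bounded below by pairing indices $i\leftrightarrow d-i$ and above by evaluating at the two shifts $0$ and $(p-1)/2$. This matches the paper's proof, including the final optimisation on $\mathbb{F}_p$, so there is nothing to add.
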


\bigskip

\centerline{\bf Acknowledgement}

The authors would like to thank Sean Eberhard and L\'aszl\'o Pyber for comments on an earlier version of the paper.

\end{document}